\documentclass{article}
\usepackage{cite}
\usepackage{amsmath,amsfonts,amssymb,amsthm,mathtools} 
\usepackage[affil-it]{authblk}

\newtheorem{theorem}{Theorem}[section]
\newtheorem{lemma}{Lemma}[section]

\def\authorfont{\footnotesize}
\def\keywords#1{\par
	\vspace*{8pt}
	{\authorfont{\leftskip18pt\rightskip\leftskip
			\noindent{\it Keywords}\/:\ #1\par}}\par}
\def\ccode#1{\par
	\vspace*{8pt}
	{\authorfont{\leftskip18pt\rightskip\leftskip
			\noindent #1\par}}\par}

\begin{document}

\date{\vspace{-5ex}}

\title{\MakeUppercase{On the endomorphisms and derivations of some\\ Leibniz algebras }}

\author{\footnotesize Leonid A. Kurdachenko}
\affil{Department of Geometry and Algebra, Faculty of Mechanics and Mathematics,\newline
Oles Honchar Dnipro National University, Gagarin ave., 72, Dnipro, 49010, Ukraine\newline
\texttt{lkurdachenko@i.ua}}

\author{Igor Ya. Subbotin}
\affil{Department of Mathematics and Natural Sciences, National University,\newline
5245 Pacific Concourse Drive, Los Angeles, California, 90045, USA\newline
\texttt{isubboti@nu.edu}}

\author{Viktoriia S. Yashchuk}
\affil{Department of Geometry and Algebra, Faculty of Mechanics and Mathematics,\newline 
Oles Honchar Dnipro National University, Gagarin ave., 72, Dnipro, 49010, Ukraine\newline
\texttt{Viktoriia.S.Yashchuk@gmail.com}}

\maketitle

\begin{abstract}
	We study the endomorphisms and derivations of infinite dimensional cyclic Leibniz algebra. 
\end{abstract}

\keywords{derivation, endomorphism, finitary matrix, ideal, Leibniz algebra, Lie algebra, monoid}

\ccode{2000 Mathematics Subject Classification: 17A32, 17A60, 17A99}

Let $L$ be an algebra over a field $F$ with the binary operations $+$ and $[\cdot,\cdot]$. Then $L$ is called \textit{a left Leibniz algebra}, if it satisfies the left Leibniz identity
\begin{equation*} 
[[a, [b, c]] = [a,[b, c]] - [b, [a, c]] \mbox{ for all } a, b, c \in L. 
\end{equation*} 

We will also use another form of this identity:
\begin{equation*} 
[[a, [b, c]] = [[a, b], c] + [b, [a, c]] \mbox{ for all } a, b, c \in L. 
\end{equation*} 
Leibniz algebras appeared first in the paper of A. Bloh \cite{BA1965}, but the term ``Leibniz algebra'' appears in the book of J.-L. Loday \cite{LJ1992} and his article \cite{LJ1993}. In \cite{LP1993} J. Loday and T. Pirashvili began the actual study of properties of Leibniz algebras. The theory of Leibniz algebras was developed very intensively within many different directions. Some of the results of this theory were presented in the book \cite{AOR2020}. Note that Lie algebras are a partial case of Leibniz algebras. Conversely, if $L$ is a Leibniz algebra in which $[a, a] = 0$ for every element $a \in L,$ then it is a Lie algebra. Thus, Lie algebras can be characterized as the anticommutative Leibniz algebras. The question about those properties of Leibniz algebras that the Lie algebra does not have and, accordingly, about those types of Leibniz algebras that have essential differences from Lie algebras naturally arises. A lot has already been done in this direction. We will not review the related results here, we simply link to the surveys \cite{KKPS2017, KSeSu2020} and the papers \cite{CKSu2017, CKSe2020, KOP2016, KOS2019, KSeSu2017, KSeSu2018, KSuSe2018, KSY2018, KSY2020, KSY2020A, YaV2019}. When studying Leibniz algebras,
the information about the endomorphisms and derivations of a Leibniz algebra is
quite useful.

Let $L$ be a Leibniz algebra. As usual, a linear transformation of $L$ is called \textit{an endomorphism}, if $f([a, b]) = [f(a), f(b)]$ for all $a, b \in L$. Clearly a product of two endomorphisms of $L$ is also an endomorphism, so that the set of all endomorphisms of $L$ is a semigroup by its multiplication. We note that the sum of two endomorphisms is not necessarily an endomorphism, so we cannot talk about an endomorphism ring.

Here we will use the term semigroup for a set, having an associative binary operation. For a semigroup, having an identity element, we will use the term \textit{monoid}. Clearly an identical permutation is an endomorphism of $L$, therefore the set $\mathbf{Lend}(L)$ of all endomorphisms of $L$ is a monoid by a multiplication.

As usual, a bijective endomorphism of $L$ is called \textit{an automorphism} of $L$.

Let $f$ be an automorphism of $L$, then it is possible to show that the mapping $f^{-1}$ is also an automorphism. Thus the set $\mathbf{Aut}(L)$ of all automorphisms of $L$ is a group by a multiplication.

Note that the endomorphisms of Leibniz algebras virtually were not studied. It was also quite unusual that the structure of cyclic Leibniz algebras was described relatively recently (see paper \cite{CKSu2017}). In the current paper we will consider endomorphisms of an infinite dimensional cyclic Leibniz algebra.

If $V$ is a vector space over a field $F$ having countable dimension, $\{v_n \mid n \in \mathbb N\}$ be a basis of $V$ and $f$ be a linear transformation of $V$, then 
\begin{equation*}
f(v_j) = \sum\limits_{n \in \mathbb N} \sigma_{nj}v_n,
\end{equation*} 
where among the coefficients $\sigma_{nj}$ only a finite subset of them are non-zero. As for finitedimensional vector spaces, we can talk about the matrix of a linear transformation $f$ on the basis $\{v_n \mid n \in \mathbb N\}$. This matrix will be infinite, but each of its columns has only a finite set of non-zero coefficients. Let us denote by $\mathbf{Mat}_{\mathbb N}(F)$ set of all matrices of this kind. As for finite-dimensional matrices, it is possible to define the product of such matrices, and as for finite-dimensional matrices, this product will be the matrix of the product of the corresponding linear transformations of a vector space $V$. Thus, the algebra $\mathbf{End}_{F}(V)$ of all linear transformations of $V$ is isomorphic to the algebra $\mathbf{Mat}_{\mathbb N}(F)$. This makes it possible to use the matrix apparatus for infinite-dimensional vector spaces. However, in contrast to the finite-dimensional case, the apparatus of infinitedimensional matrices is just beginning to be developed. The theory of infinitedimensional matrices over a field differs significantly from the finite-dimensional case. For example, an infinite-dimensional non-singular matrix may not have an inverse; more precisely, in the inverse matrix, its columns may contain an infinite set of non-zero coefficients.

The first our main result is the following.

\begin{theorem} \label{Theorem A}
	Let $L$ be a cyclic infinite dimensional Leibniz algebra over a field $F$. Then the monoid $\mathbf{Lend}(L)$ of all endomorphisms of $L$ is an union of an ideal $S$ with zero multiplication and a submonoid $\mathbf{Mon}(L)$ of all monomorphisms of $L$. Furthermore, $\mathbf{Mon}(L)$ is a product of an abelian submonoid $A$ and an abelian subgroup $D$, satisfying the following conditions:
	
	\begin{itemize}
		\item[$\mathrm{(i)}$] $A \cap D = \langle1\rangle$; 
		\item[$\mathrm{(ii)}$] $d^{-1}Ad = A$ for each element $d \in D$;
		\item[$\mathrm{(iii)}$] $D$ is isomorphic to a multiplicative group of a field $F$;
		\item[$\mathrm{(iv)}$] $A$ is isomorphic to a submonoid of a polynomial ring $F[X]$, consisting of those polynomials whose free term is $1$, in particular, $A$ is a free abelian monoid.
	\end{itemize}
\end{theorem}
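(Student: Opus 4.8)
The plan is to first pin down the multiplication table of $L$, then reduce all of $\mathbf{Lend}(L)$ to a single parameter, and finally read off the monoid structure from a polynomial model. For the first step I fix a generator $a$, set $u_1 = a$ and $u_{k+1} = [a, u_k]$, and exploit the peculiarity of left Leibniz algebras: putting $a = b = x$, $c = y$ in the left Leibniz identity gives $[[x,x],y] = 0$, so the Leibniz kernel $\mathrm{Leib}(L) = \langle\,[x,x]\,\rangle$ satisfies $[\mathrm{Leib}(L), L] = 0$. Since $u_2 = [a,a] \in \mathrm{Leib}(L)$ and $\mathrm{Leib}(L)$ is an ideal stable under $[a,-]$, every $u_k$ with $k \geq 2$ lies in $\mathrm{Leib}(L)$; hence the only nonzero products of basis vectors are $[u_1, u_k] = u_{k+1}$. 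Because $L$ is infinite dimensional, $\{u_k \mid k \geq 1\}$ is a basis and this table determines $L$. I write $T$ for the shift $u_k \mapsto u_{k+1}$, which is exactly $[a,-]$.

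Next I parametrize: an endomorphism $f$ is determined by $v := f(u_1) = \sum_n c_n u_n$, and conversely every $v$ arises. Indeed, defining $f(u_{k+1}) = [v, f(u_k)]$ yields a linear map, and the only identities to check, $[f(u_i), f(u_j)] = 0$ for $i \geq 2$, hold automatically because $f$ carries $\mathrm{Leib}(L)$ into $\mathrm{Leib}(L)$, which annihilates on the left. The computation that unlocks everything is $[v,-] = c_1 T$ (all the higher $u_n$ act as zero on the left), whence $f(u_k) = c_1^{k-1} T^{k-1}(v)$. Writing $v = P_v(T)\,u_1$ for the polynomial $P_v(T) = \sum_n c_n T^{n-1}$ (so $P_v(0) = c_1$) and using that $T$ commutes with $P_v(T)$, this becomes $f(u_k) = c_1^{k-1} P_v(T)\,u_k$. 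A short check shows $c_1$ is multiplicative under composition, so $c_1 \colon \mathbf{Lend}(L) \to F$ is a monoid homomorphism into the multiplicative monoid of $F$.

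I then split along this homomorphism. The fibre $S = \{c_1 = 0\}$ consists of maps with $[v,-] = 0$, so they kill $u_2, u_3, \dots$; any two of them compose to the zero map, and multiplicativity of $c_1$ makes $S$ a two-sided ideal, so $S$ is an ideal with zero multiplication and its complement $\mathbf{Mon}(L) = \{c_1 \neq 0\}$ is a submonoid whose members are triangular with nonzero diagonal $(c_1^{k})_k$, hence monomorphisms. Inside $\mathbf{Mon}(L)$ I put $D = \{\delta_c : u_k \mapsto c^k u_k\}$ (the maps with $v = c\,a$) and $A = \{c_1 = 1\}$. Then $\delta_c \delta_{c'} = \delta_{cc'}$ gives (iii); for $c_1 = 1$ the formula reads $f(u_k) = P_v(T)\,u_k$, so composition is polynomial multiplication $\alpha_P \alpha_Q = \alpha_{PQ}$, identifying $A$ with the monoid of polynomials of free term $1$, which is free abelian after normalizing the factors of a unique factorization to have free term $1$, giving (iv). The factorization $f = \alpha_{P_v/c_1}\,\delta_{c_1}$ shows $\mathbf{Mon}(L) = AD$; comparing diagonals yields $A \cap D = \langle 1\rangle$ (i); and $\delta_c^{-1}\alpha_Q\delta_c = \alpha_{Q(T/c)} \in A$ gives (ii).

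The substantive points, as opposed to the routine verifications, are two. First, one must genuinely justify that an arbitrary $v$ defines an endomorphism; this is precisely where the left-Leibniz fact $[\mathrm{Leib}(L), L] = 0$ does the work, and I would isolate it as a preliminary lemma rather than bury it. Second, since $L$ is infinite dimensional the hard part is to resist conflating injectivity with bijectivity: the elements of $\mathbf{Mon}(L)$ need not be surjective, and an inverse in $\mathbf{Mat}_{\mathbb N}(F)$ may have columns with infinitely many nonzero entries, so the monomorphism property must be read off directly from the triangular shape and nonvanishing diagonal, not from the invertibility of a matrix.
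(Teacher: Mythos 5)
Your proposal is correct, and while it arrives at exactly the same decomposition as the paper (the same $S$, the same $A$ and $D$, the same factorization $\mathbf{Mon}(L)=AD$), the machinery is genuinely different and notably more self-contained. Where the paper quotes \cite{CKSu2017} for the multiplication table of $L$, you derive it from the consequence $[[x,x],y]=0$ of the left Leibniz identity; where the paper's Lemma 1.4 obtains the endomorphism formula by an explicit induction and then works with finitary infinite matrices throughout (the monoids $\mathbf{MC}(\infty)$, $\mathbf{UC}(\infty)$, $\mathbf{DmC}(\infty)$, a long product computation for the factorization, and an entrywise conjugation computation for (ii)), you encode everything in the shift operator $T$: the identity $f(u_k)=c_1^{k-1}P_v(T)u_k$ turns composition into multiplication in the commutative algebra $F[T]$, so $\alpha_P\alpha_Q=\alpha_{PQ}$, the factorization $f=\alpha_{P_v/c_1}\,\delta_{c_1}$, and $\delta_c^{-1}\alpha_Q\delta_c=\alpha_{Q(X/c)}$ each come out in one line. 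Likewise, where the paper splits $\mathbf{Lend}(L)$ via its Corollary 1.5 (characterizing $S$ as $\{f\mid f^2=0\}$), you split it along the monoid homomorphism $c_1\colon\mathbf{Lend}(L)\to(F,\cdot)$, which yields the ideal property of $S$ and the submonoid property of $\mathbf{Mon}(L)$ for free. What the paper's route buys is the explicit matrix realization (of independent interest to the authors, who stress the infinite-matrix apparatus) and the extra characterization of $S$ by $f^2=0$; what yours buys is brevity and conceptual transparency, including a cleaner justification than the paper gives for freeness of $A$ (normalizing a UFD factorization to constant term $1$). Two points you gloss should be written out: the linear independence of $\{u_k\}$ (a dependence relation would force $L$ to be finite dimensional --- one line), and the ``short check'' that $c_1$ is multiplicative. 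Your closing caution about injectivity versus surjectivity is exactly the phenomenon the paper flags: an infinite non-singular finitary matrix need not have a finitary inverse, so monomorphy must be read from the triangular shape, as you do.
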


\textbf{Corollary $\mathbf{A_{1}}$.} 
\textit{Let $L$ be a cyclic infinite dimensional Leibniz algebra over a field $F$. Then the group of all automorphisms of $L$ is isomorphic to a multiplicative group of a field $F$.}

Other linear transformations of a Leibniz algebra $L$ are the derivations. Denote by $\mathbf{End}_{F}(L)$ the set of all linear transformations of $L$, then $L$ is an associative algebra by the operations $+$ and $\circ$. As usual, $\mathbf{End}_{F}(L)$ is a Lie algebra by the operations $+$ and $[\cdot,\cdot]$, where $[f, g] = f \circ g - g \circ f$ for all $f, g \in \mathbf{End}_{F}(L)$.

A linear transformation $f$ of a Leibniz algebra $L$ is called a \textit{derivation}, if 
\begin{equation*}
f([a, b]) = [f(a), b] + [a, f(b)] \mbox{ for all } a, b \in L.
\end{equation*}

Let $\mathbf{Der}(L)$ be the subset of all derivations of $L$. It is possible to prove that $\mathbf{Der}(L)$ is a subalgebra of a Lie algebra $\mathbf{End}_{F}(L)$. $\mathbf{Der}(L)$ is called the \textit{algebra of derivations} of a Leibniz algebra $L$.

The derivations of Leibniz algebras also not very much studied, although their influence on the structure of the Leibniz algebra is more significant. This is indicated by the following result: if $A$ is an ideal of a Leibniz algebra, then the factor-algebra of $L$ by the annihilator of $A$ is isomorphic to some subalgebra of $\mathbf{Der}(A)$ \cite[Proposition 3.2]{KOP2016}.

Our second main result gives the description of an algebra of derivations of a cyclic infinite dimensional Leibniz algebra.

\begin{theorem} \label{Theorem B} 
	Let $L$ be a cyclic infinite dimensional Leibniz algebra over a field $F$. Then the Lie algebra $\mathbf{Der}(L)$ of all derivations of $L$ satisfies the following conditions:
	\begin{itemize}
		\item[$\mathrm{(i)}$] $L$ includes an abelian ideal \\
		$A = \{f \mid f \in \mathbf{Der}(L)\mbox{ and } f(x)\in[L, L] \mbox{ for all } x\in L\}$ \\
		and an abelian subalgebra $D$ such that $\mathbf{Der}(L) = A + D$ and $A \cap D = \langle0\rangle$;
		\item[$\mathrm{(ii)}$] $D$ is isomorphic to a field $F$;
		\item[$\mathrm{(iii)}$] if $\mathbf{char}(F) = 0$, then $[d, A] = A$ for each element $d \in D$.
	\end{itemize}
\end{theorem}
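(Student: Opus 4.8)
The plan is to first determine the multiplication of $L$ explicitly and then to identify $\mathbf{Der}(L)$ with $L$ as a vector space. By the structure theory of cyclic Leibniz algebras (see \cite{CKSu2017}), if $a$ generates $L$ and we set $a_1 = a$ and $a_{n+1} = [a, a_n]$, then $\{a_n \mid n \in \mathbb N\}$ is a basis of $L$ and the whole multiplication is encoded by $[a_1, a_n] = a_{n+1}$ together with $[a_m, a_n] = 0$ for all $m \ge 2$ and all $n$. I would obtain these last relations directly from the Leibniz identity: taking $x = y = a$ in $[[x,y],z] = [x,[y,z]] - [y,[x,z]]$ gives $[a_2, z] = 0$ for every $z$, and a one-line induction using the same identity propagates this to $[a_m, z] = 0$ for all $m \ge 2$. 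In particular $[L,L] = \langle a_n \mid n \ge 2\rangle$.

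Next I would prove that a derivation is freely and uniquely determined by its value $v = f(a)$. The derivation rule applied to $a_{n+1} = [a, a_n]$ forces the recursion $f(a_{n+1}) = [v, a_n] + [a, f(a_n)]$, which both defines $f$ on the whole basis once $v$ is prescribed and shows that $v \mapsto f_v$ is linear. The one point requiring care is that the linear map so obtained is genuinely a derivation, i.e. that $\Delta(x,y) := f([x,y]) - [f(x),y] - [x,f(y)]$ vanishes on all basis pairs; this is the step I expect to be the only real obstacle. For the pairs $(a_1, a_n)$ it holds by the definition of the recursion, while for $(a_m, a_n)$ with $m \ge 2$ one has $[a_m, a_n] = 0$ and $[a_m, f(a_n)] = 0$, so the requirement collapses to the vanishing of the $a_1$-coefficient of $f(a_m)$. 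This is settled by reading off the recursion that $[v, a_n]$ and $[a, f(a_n)]$ always lie in $\langle a_k \mid k \ge 2\rangle$, so that $f(a_n) \in [L,L]$ for every $n \ge 2$ whatever $v$ is. Consequently every $v \in L$ yields a derivation and $v \mapsto f_v$ is a linear isomorphism $L \to \mathbf{Der}(L)$.

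I would then carry the Lie bracket over to $L$ along this isomorphism. Since a commutator of derivations is a derivation, $[f_v, f_w] = f_{\{v,w\}}$ with $\{v,w\} = f_v(w) - f_w(v)$, so it is enough to evaluate the $f_{a_k}$ on the basis. A short computation with the recursion produces the grading derivation $f_{a_1}(a_n) = n a_n$ and the shift derivations $f_{a_k}(a_n) = a_{n+k-1}$ for $k \ge 2$; from these one reads $\{a_k, a_l\} = 0$ for $k, l \ge 2$ and $\{a_1, a_l\} = (l-1)a_l$ for $l \ge 2$. Put $A = \{f_v \mid v \in [L,L]\} = \langle f_{a_k} \mid k \ge 2\rangle$ and $D = \langle f_{a_1}\rangle$. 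The first set of brackets shows $A$ is abelian, and $[f_{a_1}, f_{a_l}] = (l-1)f_{a_l} \in A$ shows both that $A$ is an ideal and that $D$ is a one-dimensional, hence abelian, subalgebra. Using that $f(a_n) \in [L,L]$ automatically for $n \ge 2$, membership $f \in A$ is equivalent to $f(a) \in [L,L]$, which matches $A$ with the set named in the statement; and writing $v = c_1 a_1 + (v - c_1 a_1)$ with $v - c_1 a_1 \in [L,L]$ gives $\mathbf{Der}(L) = A + D$ and $A \cap D = \langle 0\rangle$. This establishes (i).

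For (ii) the map $\lambda \mapsto \lambda f_{a_1}$ is an isomorphism of $D$ onto $F$. For (iii), when $\mathbf{char}(F) = 0$ each scalar $l - 1$ with $l \ge 2$ is invertible, so $[f_{a_1}, A] = \langle (l-1)f_{a_l} \mid l \ge 2\rangle = A$ and hence $[d, A] = A$ for every nonzero $d \in D$. This is exactly where characteristic zero enters: in characteristic $p$ the bracket $[f_{a_1}, f_{a_l}]$ vanishes whenever $p \mid (l-1)$, so $[d,A]$ would be a proper subspace of $A$.
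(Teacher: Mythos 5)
Your proof is correct, and it arrives at exactly the paper's decomposition --- the same ideal $A$ of derivations taking values in $[L,L]$ and the same one-dimensional complement $D$ spanned by the grading derivation --- but by a genuinely different device. The paper first proves its Lemma 2.4 (the explicit formulas $f(a_s)=s\gamma_1 a_s+\gamma_2 a_{s+1}+\cdots+\gamma_n a_{n+s-1}$, equivalent to your recursion $f(a_{n+1})=[f(a),a_n]+[a,f(a_n)]$), then encodes each derivation as a finitary lower-triangular matrix and works inside the matrix Lie algebra $\mathbf{LC}(\infty)$, realizing $A$ and $D$ as the subalgebras $\mathbf{NC}(\infty)$ and $\mathbf{DaC}(\infty)$ and proving abelianness, the ideal property, and part (iii) by matrix-commutator computations. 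You instead identify $\mathbf{Der}(L)$ with $L$ itself via $v\mapsto f_v$ and transport the bracket, $[f_v,f_w]=f_{f_v(w)-f_w(v)}$, reducing everything to evaluating the basis derivations $f_{a_1}(a_n)=na_n$ and $f_{a_k}(a_n)=a_{n+k-1}$ for $k\geqslant 2$; these are precisely the columns of the paper's matrices, so the two pictures translate into one another. Your route buys two genuine simplifications: the converse half of the characterization (that the recursion really yields a derivation) follows from bilinearity of $\Delta$ plus the two basis-pair cases, replacing the paper's page-long expansion of $f([x,y])$, $[f(x),y]$, $[x,f(y)]$ for general elements; and the single relation $[f_{a_1},f_{a_l}]=(l-1)f_{a_l}$ makes the abelianness of $A$, part (iii), and the characteristic-$p$ obstruction you point out all immediate. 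What the paper's formalism buys is uniformity with its Theorem A, where the finitary-matrix monoids $\mathbf{MC}(\infty)$, $\mathbf{UC}(\infty)$, $\mathbf{DmC}(\infty)$ play the parallel role for endomorphisms, and a concrete realization of $\mathbf{Der}(L)$ inside the algebra of infinite finitary matrices, which is one of the paper's stated themes. One minor point: as in the paper's own proof, (iii) can only hold for nonzero $d$ (if $d=0$ then $[d,A]=\langle 0\rangle$), a caveat you correctly make explicit.
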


\section{Endomorphisms of a cyclic Leibniz algebra}

We recall some definitions.

Let $L$ be a Leibniz algebra. Define the \textit{lower central series}
\begin{equation*}
L=\gamma_{1}(L)\geqslant\gamma_{2}(L)\geqslant\ldots\gamma_{\alpha}(L)\geqslant\gamma_{\alpha+1}(L)\geqslant\ldots\gamma_{\delta}(L)
\end{equation*}
by the following rule: $\gamma_{1}(L)=L$, $\gamma_{2}(L)=[L,L]$, and recursively $\gamma_{\alpha+1}(L)=$ $=~[L,\gamma_{\alpha}(L)]$ for all ordinals $\alpha$ and $\gamma_{\lambda}(L)=\bigcap\limits_{\mu<\lambda}\gamma_{\mu}(L)$ for the limit ordinals $\lambda$. It is possible to shows that every term of this series is an ideal of $L$. The last term $\gamma_{\delta}(L)=\gamma_{\infty}(L)$ is called the \textit{lower hypocenter} of $L$. We have $\gamma_{\delta}(L)=[L,\gamma_{\delta}(L)]$.

If $\alpha=k$ is a positive integer, then $\gamma_{k}(L)=[L,[L,[L,\ldots]\ldots]]$ is the \textit{left normed commutator} of $k$ copies of $L$.

As usually, we say that a Leibniz algebra $L$ is called \textit{nilpotent}, if there exists a positive integer $k$ such that $\gamma_{k}(L) = \langle 0\rangle$. More precisely, $L$ is said to be \textit{nilpotent of nilpotency class \textbf{c}} if $\gamma_{\mathbf{c}+1}(L) = \langle 0\rangle$, but $\gamma_\mathbf{c}(L) \neq \langle 0\rangle$. 

The \textit{left} (respectively \textit{right}) \textit{center} $\zeta^{left}(L)$ (respectively $\zeta^{right}(L)$) of a Leibniz algebra $L$ is defined by the rule:
\begin{equation*}
\zeta^{left}(L)=\{x\in L \mid [x,y]=0 \mbox{ for each element } y\in L\}
\end{equation*}
(respectively,
\begin{equation*}
\zeta^{right}(L)=\{x\in L\mid [y,x]=0 \mbox{ for each element } y\in L\}).
\end{equation*}

It is not hard to prove that the left center of $L$ is an ideal, but it is not true for the right center. Moreover, $\mathbf{Leib}(L)\leqslant\zeta^{left}(L)$, so that $L/\zeta^{left}(L)$ is a Lie algebra. The right center is a subalgebra of $L$, and in general, the left and right centers are different; they even may have different dimensions (see \cite{KOP2016}).

The \textit{center} $\zeta(L)$ of $L$ is defined by the rule: 
\begin{equation*} 
\zeta(L) = \{ x \in L | [x, y] = 0 = [y, x] \mbox{ for each element } y \in L \}.
\end{equation*} 

The center is an ideal of $L$. 

Define the uppercentral series
\begin{equation*}
\langle0\rangle=\zeta_{0}(L)\leqslant\zeta_{1}(L)\leqslant\zeta_{2}(L)\leqslant\ldots\zeta_{\alpha}(L)\leqslant\zeta_{\alpha+1}(L)\leqslant\ldots\zeta_{\gamma}(L)=\zeta_{\infty}(L)
\end{equation*}
of a Leibniz algebra $L$ by the following rule: $\zeta_{1}(L)=\zeta(L)$ is the center of $L$, and recursively, $\zeta_{\alpha+1}(L)/\zeta_{\alpha}(L)=\zeta(L/\zeta_{\alpha}(L))$ for all ordinals $\alpha$, and \linebreak[5] $\zeta_{\lambda}(L)=\bigcup\limits_{\mu<\lambda}\zeta_{\mu}(L)$ for the limit ordinals $\lambda$. By definition, each term of this series is an ideal of $L$. The last term $\zeta_{\infty}(L)$ of this series is called the \textit{upper hypercenter} of $L$. If $L = \zeta_{\infty}(L)$ then $L$ is called \textit{a hypercentral} Leibniz algebra.

\begin{lemma} \label{Lemma 1.1} 
	Let $L$ be a Leibniz algebra over a field $F$ and $f$ be an automorphism of $L$. Then $f(\zeta^{left}(L)) =\zeta^{left}(L)$, $f(\zeta^{right}(L))=\zeta^{right}(L)$, $f(\zeta(L))=$ $=\zeta (L)$, $f([L, L]) = [L, L]$. 
\end{lemma}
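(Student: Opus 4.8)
The plan is to observe that each of the four subspaces in the statement is defined intrinsically from the algebra operations (addition, scalar multiplication, and the bracket), and that an automorphism together with its inverse preserves every such intrinsically defined object. Since the excerpt has already noted that $f^{-1}$ is again an automorphism of $L$ whenever $f$ is, I would prove each equality by establishing only the inclusion $f(M)\subseteq M$ for the relevant subspace $M$, applying that same inclusion to the automorphism $f^{-1}$ to get $f^{-1}(M)\subseteq M$, and then applying $f$ to both sides to deduce $M\subseteq f(M)$. This uniform device upgrades every inclusion to an equality without having to verify surjectivity onto each subspace separately.

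For the commutator ideal, I would start from the fact that $[L,L]$ is the $F$-subspace spanned by all brackets $[a,b]$. Because $f$ is an endomorphism, $f([a,b])=[f(a),f(b)]\in[L,L]$, and since $f$ is linear this gives $f([L,L])\subseteq[L,L]$; the reverse inclusion follows by the $f^{-1}$ trick above. For the left center, I would take $x\in\zeta^{left}(L)$, so that $[x,y]=0$ for all $y\in L$, and check that $f(x)$ also annihilates everything on the left: given any $z\in L$, surjectivity of $f$ lets me write $z=f(y)$, whence $[f(x),z]=[f(x),f(y)]=f([x,y])=f(0)=0$. Thus $f(x)\in\zeta^{left}(L)$, giving $f(\zeta^{left}(L))\subseteq\zeta^{left}(L)$, and once more $f^{-1}$ yields equality. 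The right center $\zeta^{right}(L)$ and the two-sided center $\zeta(L)$ are handled by the identical argument, simply bracketing on the appropriate side (or on both sides simultaneously for $\zeta(L)$).

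The proof is essentially free of obstacles, since all four objects are characteristic in the categorical sense: they are the solution sets of homogeneous bracket conditions, and a bracket-preserving bijection must carry such a solution set onto itself. The only point requiring a little care is to pass from the one-sided inclusion to the equality, and I prefer the $f^{-1}$ device precisely because it keeps this step short and uniform across all four cases.
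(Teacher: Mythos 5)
Your proposal is correct and follows essentially the same route as the paper's proof: both establish the one-sided inclusions (using surjectivity of $f$ for the three centers and the endomorphism identity for $[L,L]$) and then upgrade them to equalities via the fact that $f^{-1}$ is again an automorphism. The only cosmetic difference is that for $[L,L]$ the paper writes a general element $\alpha_{1}[u_{1},v_{1}]+\ldots+\alpha_{t}[u_{t},v_{t}]$ directly as an image under $f$ using surjectivity, whereas you apply the inclusion $f^{-1}([L,L])\subseteq [L,L]$ and then apply $f$; the two devices are interchangeable.
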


\begin{proof}
	Let $x$ be an arbitrary element of $L$ and let $z \in \zeta^{left}(L)$. Since $f$ is an automorphism of $L$, there is an element $y \in L$ such that $x = f(y)$. Then we have
	\begin{equation*}
	[f(z), x] = [f(z), f(y)] = f([z, y]) = f(0) = 0.
	\end{equation*}
	It follows that $f(z) \in \zeta^{left}(L)$.
	
	There are the elements $u, v \in L$ such that $z = f(u)$, $x = f^{-1}(v)$. We have
	\begin{equation*}
	[u, x] = [f^{-1}(z), f^{-1}(v)] =f^{-1}([z, v]) = f^{-1}(0) = 0.
	\end{equation*}
	It follows that $u \in \zeta^{left}(L)$, so that $z \in f(\zeta^{left}(L))$, and, therefore, $\zeta^{left}(L) =$ $=f(\zeta^{left}(L))$.
	
	Using the similar arguments, we obtain the equality $\zeta^{right}(L) = f(\zeta^{right}(L))$ and $f(\zeta(L)) = \zeta(L)$.
	
	If $x, y$ are the elements of $L$, then $f([x, y]) = [f(x), f(y)] \in [L, L]$. It follows that $f([L, L]) \leqslant [L, L]$. Conversely, let $w \in [L, L]$, then 
	\begin{equation*}
		w = \alpha_{1}[u_{1}, v_{1}] + \ldots + \alpha_{t}[u_{t}, v_{t}] 
	\end{equation*}
	for some elements $u_{1}, v_{1}, \ldots , u_{t}, v_{t} \in L$, $\alpha_{1}, \ldots , \alpha_{t} \in F$. Since $f$ is an automorphism of $L$, there are the elements $a_{1}, b_{1}, \ldots, a_{t}, b_{t} \in L$ such that $u_{j} = f(a_{j})$, $v_{j} = f(b_{j})$, $1 \leqslant j \leqslant t$. We have
	\begin{equation*}
	\begin{gathered}
	w = \sum\limits_{1 \leqslant j \leqslant t}\alpha_{j}[u_{j}, v_{j}] = \sum\limits_{1 \leqslant j \leqslant t} \alpha_{j}[f(a_{j}), f(b_{j})] = \\ = \sum\limits_{1 \leqslant j \leqslant t} \alpha_{j}f([a_{j}, b_{j}]) = f\left(\sum\limits_{1 \leqslant j \leqslant t} \alpha_{j}[a_{j}, b_{j}]\right) \in f([L, L]).
	\end{gathered}
	\end{equation*}
	It follows that $[L, L] \leqslant f([L, L])$, and hence $[L, L] = f([L, L])$.
\end{proof}

\begin{lemma} \label{Lemma 1.2} 
	Let $L$ be a Leibniz algebra over a field $F$ and $f$ be an automorphism of $L$. Then $f(\zeta_{\alpha}(L)) = \zeta_{\alpha}(L)$, $f(\gamma_{\alpha}(L)) = \gamma_{\alpha}(L)$ for all ordinals $\alpha$, in particular, $f(\zeta_{\infty}(L)) = \zeta_{\infty}(L)$ and $f(\gamma_{\infty}(L))= \gamma_{\infty}(L)$.
\end{lemma}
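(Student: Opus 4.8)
The plan is to argue by transfinite induction on the ordinal $\alpha$, handling the two series separately and using Lemma \ref{Lemma 1.1} as the engine that powers each successor step. Throughout I would use repeatedly that an automorphism $f$ is bijective: surjectivity lets $f$ commute with arbitrary unions, while injectivity lets it commute with arbitrary intersections, and this is precisely what upgrades the easy one-sided inclusions into the desired equalities.

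For the lower central series I would start from the base cases $\gamma_1(L) = L$, where $f(L) = L$ is immediate from surjectivity, and $\gamma_2(L) = [L,L]$, which is exactly the last assertion of Lemma \ref{Lemma 1.1}. For a successor ordinal, assuming $f(\gamma_\alpha(L)) = \gamma_\alpha(L)$, I would repeat verbatim the commutator computation from the proof of Lemma \ref{Lemma 1.1}: for $x \in L$ and $y \in \gamma_\alpha(L)$ one has $f([x,y]) = [f(x), f(y)] \in [L, \gamma_\alpha(L)] = \gamma_{\alpha+1}(L)$, giving $f(\gamma_{\alpha+1}(L)) \subseteq \gamma_{\alpha+1}(L)$, and the reverse inclusion follows by applying the same computation to the automorphism $f^{-1}$, which also satisfies $f^{-1}(\gamma_\alpha(L)) = \gamma_\alpha(L)$. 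For a limit ordinal $\lambda$ with $f(\gamma_\mu(L)) = \gamma_\mu(L)$ for all $\mu < \lambda$, injectivity yields $f(\bigcap_{\mu<\lambda}\gamma_\mu(L)) = \bigcap_{\mu<\lambda}f(\gamma_\mu(L)) = \bigcap_{\mu<\lambda}\gamma_\mu(L)$, that is $f(\gamma_\lambda(L)) = \gamma_\lambda(L)$.

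For the upper central series the base cases are $\zeta_0(L) = \langle 0\rangle$ (trivial) and $\zeta_1(L) = \zeta(L)$ (again Lemma \ref{Lemma 1.1}). The successor step is the one place that needs an extra idea: assuming $f(\zeta_\alpha(L)) = \zeta_\alpha(L)$, the map $f$ descends to a well-defined automorphism $\bar f$ of the quotient Leibniz algebra $L/\zeta_\alpha(L)$. Applying Lemma \ref{Lemma 1.1} to $\bar f$ gives $\bar f(\zeta(L/\zeta_\alpha(L))) = \zeta(L/\zeta_\alpha(L))$, i.e. $\bar f(\zeta_{\alpha+1}(L)/\zeta_\alpha(L)) = \zeta_{\alpha+1}(L)/\zeta_\alpha(L)$; pulling this back through the canonical projection $\pi$ via $\bar f \circ \pi = \pi \circ f$, and using both $\zeta_\alpha(L) \subseteq \zeta_{\alpha+1}(L)$ and $f(\zeta_\alpha(L)) = \zeta_\alpha(L)$, produces $f(\zeta_{\alpha+1}(L)) = \zeta_{\alpha+1}(L)$. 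For a limit ordinal $\lambda$, since the image of a union is the union of the images, $f(\bigcup_{\mu<\lambda}\zeta_\mu(L)) = \bigcup_{\mu<\lambda}f(\zeta_\mu(L)) = \bigcup_{\mu<\lambda}\zeta_\mu(L)$, so $f(\zeta_\lambda(L)) = \zeta_\lambda(L)$.

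Finally, the two ``in particular'' claims are just the instances of the statement for the ordinals at which the respective series stabilize, namely $\alpha = \delta$ for $\gamma_\infty(L)$ and $\alpha = \gamma$ for $\zeta_\infty(L)$. I expect the only genuine obstacle to be the bookkeeping in the successor step of the upper central series, i.e. checking carefully that $f$ induces a bona fide automorphism of $L/\zeta_\alpha(L)$ so that Lemma \ref{Lemma 1.1} may be invoked there; every other step is either trivial or a direct transcription of arguments already carried out in the proof of Lemma \ref{Lemma 1.1}.
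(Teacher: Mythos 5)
Your proof is correct and is essentially the argument the paper intends: the paper gives no details at all (it says only ``The proof is similar,'' referring to Lemma~1.1), and a transfinite induction powered by Lemma~1.1 --- with the inverse automorphism supplying the reverse inclusions at successor steps, and injectivity/surjectivity handling intersections and unions at limit steps --- is exactly that ``similar'' proof made precise. The one place where you add something beyond a transcription of Lemma~1.1, namely checking that $f$ induces a well-defined automorphism $\bar f$ of the quotient $L/\zeta_{\alpha}(L)$ (legitimate, since each $\zeta_{\alpha}(L)$ is an ideal and $f(\zeta_{\alpha}(L))=\zeta_{\alpha}(L)$ by the induction hypothesis) so that Lemma~1.1 applies to $\bar f$, is the standard and correct way to handle the successor step of the upper central series.
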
	
The proof is similar.

\begin{lemma} \label{Lemma 1.3} 
	Let $L$ be a Leibniz algebra over a field $F$ and $f$ be an endomorphism of $L$. Then $f(\gamma_{\alpha}(L)) \leqslant \gamma_{\alpha}(L)$ for all ordinals $\alpha$, in particular, $f(\gamma_{\infty}(L)) \leqslant\gamma_{\infty}(L)$.
\end{lemma}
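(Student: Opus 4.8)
The plan is to proceed by transfinite induction on the ordinal $\alpha$, following exactly the recursive definition of the lower central series. The only properties of $f$ I will use are that it is a linear transformation of $L$ and that it is multiplicative on brackets, i.e.\ $f([a,b]) = [f(a), f(b)]$ for all $a, b \in L$. The base case $\alpha = 1$ is immediate, since $\gamma_{1}(L) = L$ and $f(L) \leqslant L$ because $f$ maps $L$ into itself.

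For the successor step I would assume as induction hypothesis that $f(\gamma_{\alpha}(L)) \leqslant \gamma_{\alpha}(L)$ and deduce the same for $\alpha + 1$. Recall that $\gamma_{\alpha+1}(L) = [L, \gamma_{\alpha}(L)]$ is the $F$-subspace spanned by all commutators $[x, y]$ with $x \in L$ and $y \in \gamma_{\alpha}(L)$. For such a generator, $f([x,y]) = [f(x), f(y)]$; here $f(x) \in L$, while $f(y) \in \gamma_{\alpha}(L)$ by the induction hypothesis, so $[f(x), f(y)] \in [L, \gamma_{\alpha}(L)] = \gamma_{\alpha+1}(L)$. Since an arbitrary element of $\gamma_{\alpha+1}(L)$ is a finite $F$-linear combination of such generators and $f$ is linear, $f$ carries it back into $\gamma_{\alpha+1}(L)$, giving $f(\gamma_{\alpha+1}(L)) \leqslant \gamma_{\alpha+1}(L)$.

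For a limit ordinal $\lambda$ I would invoke $\gamma_{\lambda}(L) = \bigcap_{\mu < \lambda} \gamma_{\mu}(L)$: if $z$ lies in every $\gamma_{\mu}(L)$ with $\mu < \lambda$, then by the induction hypothesis $f(z) \in \gamma_{\mu}(L)$ for each such $\mu$, whence $f(z)$ lies in the intersection $\gamma_{\lambda}(L)$. This closes the induction, and the assertion about $\gamma_{\infty}(L)$ is then just the instance $\alpha = \delta$, where $\gamma_{\delta}(L) = \gamma_{\infty}(L)$ is the lower hypocenter.

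As for the main obstacle: there is none of real substance, as the argument is routine transfinite induction. The single point that deserves care is the successor step, where one must remember that $[L, \gamma_{\alpha}(L)]$ denotes the $F$-linear span of the bracket products and not merely the set of products, so that both the multiplicativity and the linearity of $f$ must be used together. It is also worth noting that, in contrast to Lemma~\ref{Lemma 1.2}, one obtains only inclusions here rather than equalities, precisely because a non-surjective endomorphism need not map a term of the series onto itself.
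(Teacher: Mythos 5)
Your proof is correct and takes essentially the same route as the paper's: transfinite induction in which the successor step uses multiplicativity of $f$ on the generators $[x,y]$ of $[L,\gamma_{\alpha}(L)]$ together with linearity, and the limit step uses the intersection $\gamma_{\lambda}(L)=\bigcap_{\mu<\lambda}\gamma_{\mu}(L)$. Your explicit remark that $[L,\gamma_{\alpha}(L)]$ is the $F$-linear span of brackets, so that linearity of $f$ is genuinely needed, is a small point the paper leaves implicit, but the arguments are otherwise identical.
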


\begin{proof}
	If $x, y$ are the elements of $L$, then $f([x, y]) = [f(x), f(y)] \in [L, L]$. It follows that $f([L, L]) \leqslant [L, L]$. Suppose that we have already proved that $f(\gamma_{\beta}(L)) \leqslant\gamma_{\beta}(L)$ for all ordinals $\beta<\alpha$. If $\alpha$ is a limit ordinal, then \linebreak[5] $\gamma_{\alpha}(L) = \bigcap\limits_{\beta<\alpha}\gamma_{\beta}(L)$. In this case,
	\begin{equation*}
	f(\gamma_{\alpha}(L))= f\left(\bigcap\limits_{\beta<\alpha}\gamma_{\beta}(L)\right) 	\leqslant \bigcap\limits_{\beta<\alpha} f(\gamma_{\beta}(L)) \leqslant \bigcap\limits_{\beta<\alpha}\gamma_{\beta}(L) = \gamma_{\alpha}(L).
	\end{equation*}
	
	Suppose now that $\alpha$ is not a limit ordinal, then $\alpha -1 = \delta$ exists. We have $\gamma_{\alpha}(L) =~[L, \gamma_{\delta}(L)]$. By induction hypothesis, $f(\gamma_{\delta}(L)) \leqslant \gamma_{\delta}(L)$. Let $w \in L$, $v \in \gamma_{\delta}(L)$, then $f([w, v]) = [f(w), f(v)] \in [L, \gamma_{\delta}(L)] = \gamma_{\alpha}(L)$. It follows that $f([L, \gamma_{\delta}(L)]) \leqslant \gamma_{\alpha}(L)$.
\end{proof}

\begin{lemma} \label{Lemma 1.4} 
	Let $L$ be a cyclic infinite dimensional Leibniz algebra over a field $F$, $L = \bigoplus\limits_{n \in \mathbb N} Fa_{n}$, where $[a_{1}, a_{n}] = a_{n+1}$ for all positive integer $n$, $[a_{m}, a_{k}] = 0$ for all $m > 1$, $k \in \mathbb N$. Then a linear mapping $f$ is an endomorphism of $L$ if and only if
	\begin{equation*}
	\begin{gathered}
	f(a_{1}) = \gamma_{1}a_{1} + \gamma_{2}a_{2} + \gamma_{3}a_{3} + \ldots + \gamma_{n - 1}a_{n - 1} + \gamma_{n}a_{n}, \\
	f(a_{2}) = \gamma_{1}^{2}a_{2} + \gamma_{1}\gamma_{2}a_{3} + \ldots + \gamma_{1}\gamma_{n-2}a_{n - 1} + \gamma_{1}\gamma_{n-1}a_{n} + \gamma_{1}\gamma_{n}a_{n+1}, \\
	f(a_{3}) = \gamma_{1}^{3}a_{3} + \gamma_{1}^{2}\gamma_{2}a_{4} + \ldots + \gamma_{1}^{2}\gamma_{n-2}a_{n} + \gamma_{1}^{2}\gamma_{n-1}a_{n + 1} + \gamma_{1}^{2}\gamma_{n}a_{n+2}, \\
	\mbox{and} \\
	f(a_{s}) = \gamma_{1}^{s}a_{s} + \gamma_{1}^{s - 1}\gamma_{2}a_{s + 1} + \gamma_{1}^{s - 1}\gamma_{3}a_{s + 2} + \ldots + \gamma_{1}^{s - 1} \gamma_{n}a_{n + s - 1}\\
	\mbox{ for all positive integer } s.
	\end{gathered}
	\end{equation*}
	
\end{lemma}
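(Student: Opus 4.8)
The plan is to exploit that $L$ is cyclic: since $a_{n+1} = [a_1, a_n]$ for every $n$, the single element $a_1$ generates $L$, so an endomorphism $f$ is completely determined by the image $f(a_1)$. Because $f$ is a linear transformation of $L$ in the finitary sense, $f(a_1)$ is a finite combination $f(a_1) = \gamma_1 a_1 + \gamma_2 a_2 + \cdots + \gamma_n a_n$, and I would take these $\gamma_i$ as the parameters appearing in the statement. The whole argument then reduces to computing $f(a_s)$ from $f(a_1)$ via the recursion $f(a_s) = f([a_1, a_{s-1}]) = [f(a_1), f(a_{s-1})]$.

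For the forward implication I would argue by induction on $s$, proving the compact form $f(a_s) = \gamma_1^{s-1}\sum_{k=1}^{n}\gamma_k a_{s+k-1}$, which is merely a repackaging of the displayed formulas. The decisive simplification is that in any product $[f(a_1), v]$ only the component $\gamma_1 a_1$ of $f(a_1)$ contributes: the defining relations $[a_m, a_k] = 0$ for $m > 1$ annihilate all higher components, so $[f(a_1), v] = \gamma_1 [a_1, v]$. Since $[a_1, a_j] = a_{j+1}$, the operator $[a_1, -]$ simply shifts every index up by one. Feeding the inductive form of $f(a_{s-1})$ through $\gamma_1 [a_1, -]$ therefore raises the power of $\gamma_1$ by one and shifts all indices by one, which yields exactly the claimed expression for $f(a_s)$; the base case $s = 1$ is the definition of the $\gamma_i$.

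For the converse I would start from a linear map $f$ defined on the basis by these formulas (for arbitrary scalars $\gamma_1, \ldots, \gamma_n$) and verify the endomorphism identity $f([a_i, a_j]) = [f(a_i), f(a_j)]$ on all basis pairs, whence it holds on all of $L$ by bilinearity. Two cases arise. If $i > 1$, then $[a_i, a_j] = 0$, so the left side vanishes; on the right, $f(a_i)$ is supported on basis vectors of index $\geq i \geq 2$, and since $[a_p, a_q] = 0$ whenever $p > 1$, the product $[f(a_i), f(a_j)]$ is also zero. If $i = 1$, then $[a_1, a_j] = a_{j+1}$ and the identity becomes $f(a_{j+1}) = [f(a_1), f(a_j)]$, which is precisely the recursion already verified in the forward computation (again because $[f(a_1), f(a_j)] = \gamma_1[a_1, f(a_j)]$).

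The computation carries no genuine difficulty; it is bookkeeping with index shifts and powers of $\gamma_1$. The point most worth isolating — and the only place where the cyclic structure is really used — is the observation that products against $f(a_1)$ collapse to $\gamma_1[a_1, -]$, since this is what forces every $f(a_s)$ into a single monomial pattern in the $\gamma_i$ and makes the closed form possible. A secondary point to keep in mind in the converse is that checking the generating recursion alone is not formally sufficient: one must also dispatch the $i > 1$ pairs, although these follow immediately from the left-annihilation rule.
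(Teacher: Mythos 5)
Your proof is correct, and its core is the same as the paper's: both directions hinge on the observation that left multiplication by $f(a_1)$ collapses to $\gamma_1[a_1,-]$ because $[a_m,a_k]=0$ for $m>1$, so that the recursion $f(a_{s+1})=[f(a_1),f(a_s)]$ forces the closed form by induction. The forward direction matches the paper's argument essentially verbatim (the paper additionally invokes its Lemma on invariance of the lower central series, which, as your write-up shows, is not actually needed for the computation). Where you genuinely diverge is the converse: the paper verifies $f([x,y])=[f(x),f(y)]$ by a long explicit coordinate computation with two arbitrary elements $x=\sum_i\lambda_i a_i$, $y=\sum_j\mu_j a_j$, expanding both sides and matching coefficients term by term, whereas you reduce to basis pairs $(a_i,a_j)$ via the bilinearity of $(a,b)\mapsto f([a,b])-[f(a),f(b)]$ and then dispatch the two cases: $i>1$ (both sides vanish by the left-annihilation rule, since $f(a_i)$ is supported on indices $\geq i\geq 2$) and $i=1$ (exactly the generating recursion already established). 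Your organization is leaner and more transparent about what the actual content is -- the recursion plus annihilation -- while the paper's computation has the minor virtue of displaying the matched coefficients explicitly; both are complete proofs. You are also right, and it is worth saying, that checking the recursion alone would not suffice for the converse: the $i>1$ pairs must be handled separately, which the paper's general-element computation does implicitly by showing that only the $\lambda_1 a_1$ component of $x$ survives in $[x,y]$ and in $[f(x),f(y)]$.
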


\begin{proof}
	Put $L_j= \bigoplus\limits_{n \geqslant j} Fa_{n}$. We have $\gamma_{1}(L) = L = L_{1}$, $\gamma_{2}(L) = L_{2}$, and, recursively, $\gamma_{n}(L) = L_{n}$ for all positive integer $n$. \textbf{Lemma \ref{Lemma 1.3}} shows that $f(L_{n}) \leqslant L_{n}$ for all positive integer $n$. We have
	$f(a_{1}) = \sum\limits_{n \in \mathbb N} \gamma_{n}a_{n}$ (here only finitely many coefficients $\gamma_{n}$ are non-zero).
	
	Then
	\begin{equation*}
	\begin{gathered}
	f(a_{2}) = f([a_{1}, a_{1}]) = \left[\sum\limits_{k \in \mathbb N} \gamma_{k}a_{k}, \sum\limits_{k \in \mathbb N} \gamma_{k}a_{k}\right] = \\
	= \left[\gamma_{1}a_{1}, \sum\limits_{k \in \mathbb N} \gamma_{k}a_{k}\right] =  \gamma_{1}\left(\sum\limits_{k \in \mathbb N} \gamma_{k}[a_{1}, a_{k}]\right) = \\
	= \gamma_{1}\left(\sum\limits_{k \in \mathbb N} \gamma_{k}a_{k+1}\right) =\sum\limits_{k \in \mathbb N} \gamma_{1}\gamma_{k}a_{k+1}. \\
	f(a_{3}) = f([a_{1}, a_{2}]) = \left[\sum\limits_{k \in \mathbb N} \gamma_{k}a_{k}, \sum\limits_{k \in \mathbb N} \gamma_{1}\gamma_{k}a_{k+1}\right] = \\
	=\left[\gamma_{1}a_{1},\sum\limits_{k \in \mathbb N} \gamma_{1}\gamma_{k}a_{k+1}\right] =\\ 
	= \gamma_{1}\left(\left[a_{1}, \sum\limits_{k \in \mathbb N} \gamma_{1}\gamma_{k}a_{k+1}\right]\right) = \sum\limits_{k \in \mathbb N} \gamma_{1}^{2}\gamma_{k}a_{k+2}.
	\end{gathered}
	\end{equation*}
	Using the similar arguments and ordinary induction we obtain that
	\begin{equation*}
	f(a_{s}) = \sum\limits_{k \in \mathbb N} \gamma_{1}^{s-1}\gamma_{k}a_{k+s-1}.
	\end{equation*}
	
	Conversely, let $x = \lambda_{1}a_{1} + \lambda_{2}a_{2} + \ldots + \lambda_{t}a_{t}$ and $y = \mu_{1}a_{1} + \mu_{2}a_{2} + \ldots + \mu_{m}a_{m}$ be arbitrary elements of $L$. Without loss of generality we may assume that $t = m$. Suppose that a linear mapping $f$ satisfies the above conditions. We have
	\begin{equation*}
	\begin{gathered} 
		[x, y] = \left[\lambda_{1}a_{1} + \lambda_{2}a_{2} + \ldots + \lambda_{t}a_{t}, \mu_{1}a_{1} + \mu_{2}a_{2} + \ldots + \mu_{t}a_{t}\right] = \\ = [\lambda_{1}a_{1}, \mu_{1}a_{1} + \mu_{2}a_{2} + \ldots + \mu_{t}a_{t}] = \\
		=\lambda_{1}\mu_{1}a_{2} + \lambda_{1}\mu_{2}a_{3} + \ldots + \lambda_{1}\mu_{t}a_{t+1};
	\end{gathered}
	\end{equation*}
	\begin{equation*}
	\begin{gathered}
		f([x, y]) = f(\lambda_{1}\mu_{1}a_{2} + \lambda_{1}\mu_{2}a_{3} + \ldots + \lambda_{1}\mu_{t}a_{t+1}) = \\ 
		= \lambda_{1}\mu_{1}f(a_{2}) + \lambda_{1}\mu_{2}f(a_{3}) + \ldots + \lambda_{1}\mu_{t}f(a_{t+1}) = \\ 
		= \lambda_{1}\mu_{1}\left(\sum\limits_{k \in \mathbb N} \gamma_{1}\gamma_{k}a_{k+1}\right) + \\
		+ \lambda_{1}\mu_{2}\left(\sum\limits_{k \in \mathbb N} \gamma_{1}^{2}\gamma_{k}a_{k+2}\right) + 
		\\ + \lambda_{1}\mu_{3}\left(\sum\limits_{k \in \mathbb N} \gamma_{1}^{3}\gamma_{k}a_{k+3}\right) + \ldots + \lambda_{1}\mu_{t}\left(\sum\limits_{k \in \mathbb N} \gamma_{1}^{t} \gamma_{k}a_{k+t}\right) = 
		\\ = \lambda_{1}\mu_{1}\gamma_{1}^{2}a_{2} + (\lambda_{1}\mu_{1}\gamma_{1}\gamma_{2} +
		\lambda_{1}\mu_{2}\gamma_{1}^{3})a_{3} + \\ 
		+ (\lambda_{1}\mu_{1}\gamma_{1}\gamma_{3} + \lambda_{1}\mu_{2}\gamma_{1}^{2}\gamma_{2} + \lambda_{1}\mu_{3}\gamma_{1}^{4})a_{4} + \ldots + \\ 
		+ (\lambda_{1}\mu_{1}\gamma_{1}\gamma_{s-1} + \lambda_{1}\mu_{2}\gamma_{1}^{2}\gamma_{s-2} + \ldots + \lambda_{1}\mu_{s-1}\gamma_{1}^{s})a_{s} + \ldots; 
		\end{gathered}
		\end{equation*}
		\begin{equation*}
		\begin{gathered}	 
		f(x) = f(\lambda_{1}a_{1} + \lambda_{2}a_{2} + \ldots + \lambda_{t}a_{t}) = 
		\\ = \lambda_{1}f(a_{1}) + \lambda_{2}f(a_{2}) + \ldots + \lambda_{t}f(a_{t}) = 
		\\ = \lambda_{1}\left(\sum\limits_{k \in \mathbb N} \gamma_{k}a_{k}\right) + \\ + \lambda_{2}\left(\sum\limits_{k \in \mathbb N} \gamma_{1}\gamma_{k}a_{k+1} \right) + \ldots +
		\\ + \lambda_{t}\left(\sum\limits_{k \in \mathbb N} \gamma_{1}^{t - 1} \gamma_{k}a_{k+t-1}\right); 
	\end{gathered}
	\end{equation*}
	\begin{equation*}
	\begin{gathered}	
		f(y) = f(\mu_{1}a_{1} + \mu_{2}a_{2} + \ldots + \mu_{t}a_{t}) = 
		\\ = \mu_{1}f(a_{1}) + \mu_{2}f(a_{2}) + \ldots + \mu_{t}f(a_{t}) = 
		\\ = \mu_{1}\left(\sum\limits_{k \in \mathbb N} \gamma_{k}a_{k}\right) +\\
		+ \mu_{2}\left(\sum\limits_{k \in \mathbb N} \gamma_{1}\gamma_{k}a_{k+1} \right) + \ldots +
		\\ + \mu_{t}\left(\sum\limits_{k \in \mathbb N} \gamma_{1}^{t-1}\gamma_{k}a_{k+t-1}\right);
	\end{gathered}
	\end{equation*}
	\begin{equation*}
	\begin{gathered}
		[f(x), f(y)] = \Bigg[\lambda_{1}\left(\sum\limits_{k \in \mathbb N} \gamma_{k}a_{k}\right) + \lambda_{2}\left(\sum\limits_{k \in \mathbb N} \gamma_{1}\gamma_{k}a_{k+1}\right) + \ldots + \\ +\lambda_{t}\left(\sum\limits_{k \in \mathbb N} \gamma_{1}^{t-1} \gamma_{k}a_{k+t-1}\right), 
		\mu_{1}\left(\sum\limits_{k \in \mathbb N} \gamma_{k}a_{k}\right)+ \\ +\mu_{2}\left(\sum\limits_{k \in \mathbb N} \gamma_{1}\gamma_{k}a_{k+1}\right) + \ldots + \mu_{t}\left(\sum\limits_{k \in \mathbb N} \gamma_{1}^{t-1} \gamma_{k}a_{k+t-1}\right)\Bigg] = \\
		= \Bigg[\lambda_{1}\gamma_{1}a_{1}, \mu_{1}\left(\sum\limits_{k \in \mathbb N} \gamma_{k}a_{k}\right) + \mu_{2}\left(\sum\limits_{k \in \mathbb N} \gamma_{1}\gamma_{k}a_{k+1} \right) + \ldots + \\
		+ \mu_{t}\left(\sum\limits_{k \in \mathbb N} \gamma_{1}^{t-1} \gamma_{k}a_{k+t-1}\right)\Bigg] = \\
		= \sum\limits_{k \in \mathbb N} \lambda_{1}\gamma_{1}\mu_{1}\gamma_{k}a_{k+1} + \sum\limits_{k \in \mathbb N} \lambda_{1}\gamma_{1}\mu_{2}\gamma_{1}\gamma_{k}a_{k+2} + \ldots + \\
		+\sum\limits_{k \in \mathbb N} \lambda_{1}\gamma_{1}\mu_{t}\gamma_{1}^{t-1} \gamma_{k}a_{k+t} = \\ 
		= \lambda_{1}\mu_{1}\gamma_{1}^{2}a_{2} + (\lambda_{1}\mu_{1}\gamma_{1}\gamma_{2} + \lambda_{1}\mu_{2}\gamma_{1}^{3})a_{3} + \\
		+ (\lambda_{1}\mu_{1}\gamma_{1}\gamma_{3} + \lambda_{1}\mu_{2}\gamma_{1}^{2}\gamma_{2} +
		+ \lambda_{1}\mu_{3}\gamma_{1}^{4})a_{4} + \ldots +	\\
		+ (\lambda_{1}\mu_{1}\gamma_{1}\gamma_{s-1} + \lambda_{1}\mu_{2}\gamma_{1}^{2}\gamma_{s-2} + \ldots + \lambda_{1}\mu_{s-1} \gamma_{1}^{s})a_{s} + \ldots 
	\end{gathered}
	\end{equation*}
	It follows that $f([x, y]) = [f(x), f(y)]$ for all elements $x, y \in L$.
\end{proof}

\textbf{Corollary 1.5.} \label{Corollary 1.5.}
\textit{Let $L$ be a cyclic infinite dimensional Leibniz algebra over a field $F$. Then the monoid of all endomorphisms of $L$ is an union of an ideal\linebreak[5] $S = \{f \mid f \in \mathbf{Lend}(L), f^{2} = 0\}$ and the submonoid $\mathbf{Mon}(L)$ consisting of all monomorphisms of $L$. Moreover, $S$ is an ideal with zero multiplication, i.e. $f\circ g = 0$ for every $f, g \in S$.}
\begin{proof}
	We have $L = \bigoplus\limits_{n \in \mathbb N} Fa_{n}$, where $[a_{1}, a_{n}] = a_{n + 1}$ for all positive integer $n$, $[a_{m}, a_{k}]= 0$ for all $m > 1$, $k \in \mathbb N$ \cite[Corollary 2.2 and 2.1]{CKSu2017}. Let $f$ be an endomorphism of $L$ and let $f(a_{1}) = \gamma_{1}a_{1} + \gamma_{2}a_{2} + \ldots + \gamma_{m}a_{m}$, where $\gamma_{j} \in F$, $1 \leqslant j \leqslant m$. Suppose that $\gamma_{1} = 0$, that is $f(a_{1}) \in [L, L] = \mathbf{Leib}(L)$. Then
	\begin{equation*}
	\begin{gathered}
	f(a_{2})= f([a_{1}, a_{1}] = [f(a_{1}), f(a_{1})] = 0, \\
	f(a_{3})= f([a_{1}, a_{3}]) = [f(a_{1}), f(a_{3})] = 0,
	\end{gathered}
	\end{equation*}
	and similarly we obtain that $f(a_{j}) = 0$ for all $j > 1$. It follows that $f(y) = 0$ for all elements $y \in [L, L]$. Then $f^{2}(a_{1}) = f(f(a_{1})) = 0$, $f^{2}(a_{j}) = f(f(a_{j})) = f(0) = 0$ for all $j > 1$. It follows that $f^{2}(x) = 0$ for all elements $x \in L$. This means that $f^{2}$ is a zero endomorphism.
	
	Conversely, let $f$ be an endomorphism of $L$ such that $f^{2} = 0$ and let again 
	\begin{equation*}
		f(a_{1}) = \gamma_{1}a_{1} + \gamma_{2}a_{2} + \ldots + \gamma_{m}a_{m}. 
	\end{equation*}
	We have
	\begin{equation*}
	\begin{gathered}
	f^{2}(a_{1}) = f(f(a_{1})) = f(\gamma_{1}a_{1} + \gamma_{2}a_{2} + \ldots + \gamma_{m}a_{m}) = 
	\\ = \gamma_{1}f(a_{1}) + \gamma_{2}f(a_{2}) + \ldots + \gamma_{m}f(a_{m}) = 
	\\ = \gamma_{1}^{2}a_{1} + \gamma_{1}\gamma_{2}a_{2} + \ldots + \gamma_{1}\gamma_{m}a_{m} + u, \mbox{ where } u \in [L, L].
	\end{gathered}
	\end{equation*}
	It follows that $f^{2}(a_{1})= \gamma_{1}^{2}a_{1} + v$, where $v \in [L, L]$. Since $Fa_{1}\cap [L, L] =\langle0\rangle$, $f^{2}= 0$ implies that $\gamma_{1}^{2}a_{1} = 0$ and $v = 0$. Thus $\gamma_{1}^{2} = 0$ and $\gamma_{1} = 0$. Hence 
	\begin{equation*}
	\begin{gathered}
	S = \{f \mid f \in \mathbf{Lend}(L), f^{2} = 0\} =
	\\ = \{f \mid f \in \mathbf{Lend}(L), f(x) \in [L, L] \mbox{ for each element } x \in L\}.
	\end{gathered}
	\end{equation*}
	
	Let $f \in S$ and $g$ be an arbitrary endomorphism of $L$. We have 
	\begin{equation*}
	(f\circ g)(x) = f(g(x)) \in [L, L],
	\end{equation*}
	and using \textbf{Lemma \ref{Lemma 1.3}} we obtain $(g \circ f)(x) = g(f(x)) \in [L, L]$. It follows that $S$ is an ideal of $\mathbf{Lend}(L)$. Moreover, let $f, g \in S$, then $(g \circ f)(x) = g(f(x)) = 0$, because $f(x) \in [L, L]$.
	
	Suppose now that $f\notin S$ and let again $f(a_{1}) = \gamma_{1}a_{1} + \gamma_{2}a_{2} + \ldots + \gamma_{m}a_{m}$. By above proved $\gamma_{1}\ne 0$. Let $x = \lambda_{1}a_{1} + \lambda_{2}a_{2} + \ldots + \lambda_{t}a_{t}$ be an arbitrary element of $L$, where $\lambda_{1}, \lambda_{2}, \ldots , \lambda_{t} \in F$. From the proof of \textbf{Lemma \ref{Lemma 1.4}} we obtain that 
	\begin{equation*}
	\begin{gathered}
	f(x) = \lambda_{1}\left(\sum\limits_{k \in \mathbb N} \gamma_{k}a_{k}\right) + \\
	+ \lambda_{2}\left(\sum\limits_{k \in \mathbb N} \gamma_{1}\gamma_{k}a_{k+1}\right) + \ldots +\\
	+ \lambda_{t}\left(\sum\limits_{k \in \mathbb N} \gamma_{1}^{t-1} \gamma_{k}a_{k+t-1}\right) = \\
	= \lambda_{1}\gamma_{1}a_{1} + (\lambda_{1}\gamma_{2} + \lambda_{2}\gamma_{1}^{2})a_{2} + \\
	+ (\lambda_{1}\gamma_{3} + \lambda_{2}\gamma_{1}\gamma_{2} +\lambda_{3}\gamma_{1}^{3})a_{3} + \ldots +
	\\+ \lambda_{t}\gamma_{1}^{t-1} \gamma_{m}a_{m+t-1} .
	\end{gathered}
	\end{equation*}
	An equality $f(x) = 0$ leads us to a system of linear equations, the matrix of which is obviously non-singular. Since such a system has only one zero solution, equality $f(x) = 0$ implies that $x = 0$. Thus every endomorphism $f$ such that $f \notin S$ is a monomorphism. Hence the monoid $\mathbf{Lend}(L)$ is an union of ideal $S$ and the subset $\mathbf{Mon}(L)$ of all monomorphisms of $L$.
	
	Clearly, a product of two monomorphisms is itself a monomorphism and an identical permutation is an automorphism of $L$. Therefore, set $\mathbf{Mon}(L)$ is a submonoid of $\mathbf{Lend}(L)$.
\end{proof}

\section{Proof of Theorem A}

We have $L = \bigoplus\limits_{n \in \mathbb N} Fa_{n}$, where $[a_{1}, a_{n}] = a_{n + 1}$ for all positive integer $n$, $[a_{m}, a_{k}] =$ $= 0$ for
all $m > 1$, $k \in \mathbb N$ \cite[Corollary 2.2 and 2.1]{CKSu2017}. By \textbf{Corollary 1.5.} the monoid $\mathbf{Lend}(L)$ is an union of an ideal $S$, consisting of all endomorphisms $h$ such that $h(x) \in [L, L]$, or, what is equivalent, $h^{2} = 0$, and a submonoid $\mathbf{Mon}(L)$, consisting of all monomorphism of $L$. A product of any two elements of $S$ is zero, so that its algebraic structure is trivial. Thus, we only must describe the submonoid $\mathbf{Mon}(L)$.

If $f$ is an arbitrary endomorphism of $L$, then \textbf{Lemma \ref{Lemma 1.4}} shows that $f$ has the following matrices in basis $\{a_{n} \mid n \in \mathbb N \}$:
\begin{equation*}
\begingroup
\setlength\arraycolsep{5pt}
\begin{pmatrix}
\gamma_{1} & 0 & 0 & 0 &\ldots & 0 & 0 & 0 & \ldots \\
\gamma_{2} & \gamma_{1}^{2} & 0 & 0 &\ldots & 0 & 0 & 0 & \ldots \\
\gamma_{3} & \gamma_{1}\gamma_{2} & \gamma_{1}^{3} & 0 &\ldots & 0 & 0 & 0 & \ldots \\
\gamma_{4} & \gamma_{1}\gamma_{3} & \gamma_{1}^{2}\gamma_{2} & \gamma_{1}^{4} &\ldots & 0 & 0 & 0 & \ldots \\
\gamma_{5} & \gamma_{1}\gamma_{4} & \gamma_{1}^{2}\gamma_{3} & \gamma_{1}^{3}\gamma_{2} &\ldots & 0 & 0 & 0 & \ldots \\
\gamma_{6} & \gamma_{1}\gamma_{5} & \gamma_{1}^{2}\gamma_{4} & \gamma_{1}^{3}\gamma_{3} &\ldots & 0 & 0 & 0 & \ldots \\
\gamma_{7} & \gamma_{1}\gamma_{6} & \gamma_{1}^{2}\gamma_{5} & \gamma_{1}^{3}\gamma_{4} &\ldots & 0 & 0 & 0 & \ldots \\
\vdots& \vdots& \vdots& \vdots& \ddots& \vdots& \vdots& \vdots& \vdots \\
\gamma_{n-2} & \gamma_{1}\gamma_{n-3} & \gamma_{1}^{2}\gamma_{n-4} & \gamma_{1}^{3}\gamma_{n-5} &\ldots & \gamma_{1}^{n-2} & 0 & 0 & \ldots \\
\gamma_{n-1} & \gamma_{1}\gamma_{n-2} & \gamma_{1}^{2}\gamma_{n-3} & \gamma_{1}^{3}\gamma_{n-4} &\ldots & \gamma_{1}^{n-3}\gamma_{2} & \gamma_{1}^{n-1} & 0 & \ldots \\
\gamma_{n} & \gamma_{1}\gamma_{n-1} & \gamma_{1}^{2}\gamma_{n-2} & \gamma_{1}^{3}\gamma_{n-3} &\ldots & \gamma_{1}^{n-3}\gamma_{3} & \gamma_{1}^{n-2}\gamma_{2} & \gamma_{1}^{n} & \ldots \\
0 & \gamma_{1}\gamma_{n} & \gamma_{1}^{2}\gamma_{n-1} & \gamma_{1}^{3}\gamma_{n-2} &\ldots & \gamma_{1}^{n-3}\gamma_{4} & \gamma_{1}^{n-2}\gamma_{3} & \gamma_{1}^{n-1}\gamma_{2} & \ldots \\
0 & 0 & \gamma_{1}^{2}\gamma_{n} & \gamma_{1}^{3}\gamma_{n-1} &\ldots & \gamma_{1}^{n-3}\gamma_{5} & \gamma_{1}^{n-2}\gamma_{4} & \gamma_{1}^{n-1}\gamma_{3} & \ldots \\
0 & 0 & 0 & \gamma_{1}^{3}\gamma_{n} &\ldots & \gamma_{1}^{n-3}\gamma_{6} & \gamma_{1}^{n-2}\gamma_{5} & \gamma_{1}^{n-1}\gamma_{4} & \ldots \\
0 & 0 & 0 & 0 &\ldots & \gamma_{1}^{n-3}\gamma_{7} & \gamma_{1}^{n-2}\gamma_{6} & \gamma_{1}^{n-1}\gamma_{5} & \ldots \\
\vdots& \vdots& \vdots& \vdots& \vdots& \vdots& \vdots& \vdots& \vdots \\
\end{pmatrix}
\endgroup
\end{equation*}

Moreover, if $f$ is a monomorphism, then \textbf{Corollary 1.5.} shows that $\gamma_{1}\ne 0$. And conversely, if a linear mapping $f$ has in basis $\{a_{n} \mid n \in \mathbb N \}$ this form, then \textbf{Lemma \ref{Lemma 1.4}} shows that $f$ is an endomorphism of $L$, and \textbf{Corollary 1.5.} shows that $\gamma_{1}\ne 0$ implies that $f$ is a monomorphism of $L$.

These matrices are finitary, so we can consider their multiplication. Since a product of two endomorphisms of $L$ is an endomorphism itself, we obtain that the matrices, having this form, form a monoid by its multiplication. Likewise the matrices, having this form with $\gamma_{1}\ne 0$, form a submonoid by multiplication. Denote this monoid
by $\mathbf{MC}(\infty)$. Thus we obtain an isomorphism \linebreak[5] $\mathbf{Mon}(L)\cong \mathbf{MC}(\infty)$.

Consider the set of matrices, having the following form
\begin{equation*}
\begingroup
\setlength\arraycolsep{8pt}
\begin{pmatrix}
1 & 0 & 0 & 0 &\ldots & 0 & 0 & 0 & \ldots \\
\gamma_{2} & 1 & 0 & 0 &\ldots & 0 & 0 & 0 & \ldots \\
\gamma_{3} & \gamma_{2} & 1 & 0 &\ldots & 0 & 0 & 0 & \ldots \\
\gamma_{4} & \gamma_{3} & \gamma_{2} & 1 &\ldots & 0 & 0 & 0 & \ldots \\
\gamma_{5} & \gamma_{4} & \gamma_{3} & \gamma_{2} &\ldots & 0 & 0 & 0 & \ldots \\
\gamma_{6} & \gamma_{5} & \gamma_{4} & \gamma_{3} &\ldots & 0 & 0 & 0 & \ldots \\
\gamma_{7} & \gamma_{6} & \gamma_{5} & \gamma_{4} &\ldots & 0 & 0 & 0 & \ldots \\
\vdots& \vdots& \vdots& \vdots& \ddots& \vdots& \vdots& \vdots& \vdots \\
\gamma_{n-2} & \gamma_{n-3} & \gamma_{n-4} & \gamma_{n-5} &\ldots & 1 & 0 & 0 & \ldots \\
\gamma_{n-1} & \gamma_{n-2} & \gamma_{n-3} & \gamma_{n-4} &\ldots & \gamma_{2} & 1 & 0 & \ldots \\
\gamma_{n} & \gamma_{n-1} & \gamma_{n-2} & \gamma_{n-3} &\ldots & \gamma_{3} & \gamma_{2} & 1 & \ldots \\
0 & \gamma_{n} & \gamma_{n-1} & \gamma_{n-2} &\ldots & \gamma_{4} & \gamma_{3} & \gamma_{2} & \ldots \\
0 & 0 & \gamma_{n} & \gamma_{n-1} &\ldots & \gamma_{5} & \gamma_{4} & \gamma_{3} & \ldots \\
0 & 0 & 0 & \gamma_{n} &\ldots & \gamma_{6} & \gamma_{5} & \gamma_{4} & \ldots \\
0 & 0 & 0 & 0 &\ldots & \gamma_{7} & \gamma_{6} & \gamma_{5} & \ldots \\
\vdots& \vdots& \vdots& \vdots& \vdots& \vdots& \vdots& \vdots& \vdots \\
\end{pmatrix}
\endgroup
\end{equation*}

Denote the set of all matrices, having this form by $\mathbf{UC}(\infty)$. 

Let $x$ be an arbitrary element of $L$, then $x = \lambda a_{1} + v$, where $v \in [L, L]$ and let $g$ be an endomorphism of $L$ such that $g(x) = \lambda a_{1} + u$, where $u \in [L, L]$. Clearly the matrix of $g$ in basis $\{a_{n} \mid n \in \mathbb N\}$ belong to $\mathbf{UC}(\infty)$, and conversely, every matrix from $\mathbf{UC}(\infty)$ define an endomorphism $g$ of $L$ such that $g(x) = \lambda a_{1} + u$, where $u \in [L, L]$. Denote by $A$ the subset of $S$, consisting of all endomorphisms with this property. If $h$ is another endomorphism of $L$ such that $h(x) = \lambda a_{1} + w$, where $w \in [L, L]$, then 
\begin{equation*}
(g\circ h)(x) = g(h(x)) = g(\lambda a_{1} + w) = \lambda a_{1} + z \mbox{ for some element } z \in [L, L].
\end{equation*} 
Taking into account the fact that the identity permutation of $L$ belongs to $A$, we obtain that $A$ is a submonoid of $\mathbf{Mon}(L)$. It follows that subset $\mathbf{UC}(\infty)$ is a submonoid of $\mathbf{MC}(\infty)$.

It is not hard to see that we can write every matrix from $\mathbf{UC}(\infty)$ in the form
\begin{equation*}
E +\gamma_{2}\sum\limits_{k \in \mathbb N}E_{k+1,k} + \gamma_{3}\sum\limits_{k \in \mathbb N}E_{k+2,k} + \ldots + \gamma_{n}\sum\limits_{k \in \mathbb N}E_{k+n-1,k}
\end{equation*}
Denote by $\mathbf{DmC}(\infty)$ the subset of $\mathbf{MC}(\infty)$, having the form $\sum\limits_{k \in \mathbb N}\gamma^{k}E_{k,k}$. It is not hard to see that $\mathbf{DmC}(\infty)$ is closed by multiplication. Denote by $D$ the subset of $\mathbf{Mon}(L)$, consisting of all endomorphisms $f$ such that $f(a_{1}) =~\gamma a_{1}$. Clearly, the matrix of every endomorphism $f \in D$ in basis $\{a_{n} \mid n \in \mathbb N\}$ belongs to $\mathbf{DC}(\infty)$, and conversely every matrix from $\mathbf{DmC}(\infty)$ defines an endomorphism $g \in D$.  Thus, $D\cong\mathbf{DC}(\infty)$. Clearly, the mapping
\begin{equation*}
\theta\colon \mathbf{DmC}(\infty) \longrightarrow \mathbf{U}(F), \mbox{ defined by the rule } \theta\left(\sum\limits_{k \in \mathbb N}\gamma^{k}E_{k,k}\right) = \gamma
\end{equation*}
is an isomorphism. It shows that $\mathbf{DmC}(\infty)$ is a subgroup of monoid $\mathbf{MC}(\infty)$ and $\mathbf{DmC}(\infty)$ is isomorphic to a multiplicative group of field $F$, in particular, it is abelian.

Consider now the product
\begin{equation*}
\begin{gathered}
\Bigg(E +\gamma_{1}^{-1}\bigg(\gamma_{2}\sum\limits_{k \in \mathbb N}E_{k+1,k} + \gamma_{3}\sum\limits_{k \in \mathbb N}E_{k+2,k} + \ldots + \\ +\gamma_{n}\sum\limits_{k \in \mathbb N}E_{k+n-1,k}\bigg)\Bigg) \cdot 
\left(\sum\limits_{t \in \mathbb N}\gamma_{1}^{t}E_{t,t}\right) = \\ =\sum\limits_{t \in \mathbb N}\gamma_{1}^{t}E_{t,t} + \left(\gamma_{1}^{-1}\gamma_{2}\sum\limits_{k \in \mathbb N}E_{k+1,k}\right)\left(\sum\limits_{t \in \mathbb N}\gamma_{1}^{t}E_{t,t}\right) + \\ +\left(\gamma_{1}^{-1}\gamma_{3}\sum\limits_{k \in \mathbb N}E_{k+2,k}\right) \left(\sum\limits_{t \in \mathbb N}\gamma_{1}^{t}E_{t,t}\right) + \ldots + \\ +\left(\gamma_{1}^{-1}\gamma_{n}\sum\limits_{k \in \mathbb N}E_{k+n-1,k}\right)\left(\sum\limits_{t \in \mathbb N}\gamma_{1}^{t}E_{t,t}\right) = \\
= \sum\limits_{t \in \mathbb N} \gamma_{1}^{t}E_{t,t} + \\
+(\gamma_{2}E_{2,1} E_{1,1}+ \gamma_{1}\gamma_{2}E_{3,2}E_{2,2} + \ldots + \gamma_{1}^{k - 1}\gamma_{2}E_{k+1,k}E_{k,k}+ \ldots ) + \\
+(\gamma_{3}E_{3,1}E_{1,1} + \gamma_{1}\gamma_{3}E_{4,2} E_{2,2} + \ldots + \gamma_{1}^{k - 1}\gamma_{3}E_{k+2,k}E_{k,k} + \ldots ) + \ldots + \\
+(\gamma_{n}E_{n,1} E_{1,1} + \gamma_{1}\gamma_{n}E_{n+1,2}E_{2,2} + \ldots + \gamma_{1}^{k - 1}\gamma_{n}E_{k+n-1,k}E_{k,k} + \ldots ) + \ldots = \\
= \sum\limits_{t \in \mathbb N}\gamma_{1}^{t}E_{t,t} + (\gamma_{2}E_{2,1} + \gamma_{1}\gamma_{2}E_{3,2} + \ldots + \gamma_{1}^{k - 1}\gamma_{2}E_{k+1,k} + \ldots ) + \\ 
+( \gamma_{3}E_{3,1} + \gamma_{1}\gamma_{3}E_{4,2} + \ldots + \gamma_{1}^{k - 1}\gamma_{3}E_{k+2,k} + \ldots ) + \ldots + \\
+(\gamma_{n}E_{n,1} + \gamma_{1}\gamma_{n}E_{n+1,2} + \ldots + \gamma_{1}^{k - 1}\gamma_{n}E_{k+n-1,k} + \ldots ).
\end{gathered}
\end{equation*}

These equalities show that monoid $\mathbf{MC}(\infty)$ is a product of submonoid $\mathbf{LC}(\infty)$ and abelian subgroup $\mathbf{DmC}(\infty)$; moreover, their intersection is trivial.

Let $\Gamma, \Lambda \in \mathbf{UC}(\infty)$ where
\begin{equation*}
\begin{gathered}
\Gamma = E +\sum\limits_{t \in \mathbb N} \left(\gamma_{t}\sum\limits_{k \in \mathbb N}E_{k+t,k}\right),
\\ \Lambda = E + \sum\limits_{t \in \mathbb N} \left(\lambda_{t}\sum\limits_{k \in \mathbb N}E_{k+t,k}\right)
\end{gathered}
\end{equation*}
(here only finitely many coefficients $\gamma_{t}, \lambda_{t}$ are non-zero). Let $n$ be the greatest number such that $\gamma_{n}\ne 0$, and, respectively, let $s$ be the greatest number  such~that $\lambda_{s} \ne 0$. Thus,
\begin{equation*}
\begin{gathered}
\Gamma = E +\gamma_{1}\sum\limits_{k \in \mathbb N}E_{k+1,k} + \gamma_{2}\sum\limits_{k \in \mathbb N}E_{k+2,k} + \ldots + \gamma_{n}\sum\limits_{k \in \mathbb N}E_{k+n,k} \\
\mbox{ and } \\
\Lambda =E + \lambda_{1}\sum\limits_{k \in \mathbb N}E_{k+1,k} + \lambda_{2}\sum\limits_{k \in \mathbb N}E_{k+2,k} + \ldots + \lambda_{s} \sum\limits_{k \in \mathbb N}E_{k+s,k}.
\end{gathered}
\end{equation*}
Put $\Gamma\Lambda = \sum\limits_{t \in \mathbb N} \left(\delta_{t}\sum\limits_{k \in \mathbb N}E_{k+t,k}\right)$. Since $\Gamma\Lambda \in \mathbf{UC}(\infty)$, then matrix $\Gamma\Lambda$ is completely defined by its first column. We have
\begin{equation*}
\begin{gathered} 
\delta_{j-1} = \gamma_{j- 1} + \gamma_{j- 2} \lambda_{1}+ \gamma_{j- 2} \lambda_{2}+ \ldots + \gamma_{1}\lambda_{j- 2} + \lambda_{j- 1} \\
\mbox { for each positive integer } j> 1.
\end{gathered}
\end{equation*}
In particular, $\delta_{n+s}= \gamma_{n}\lambda_{s}$ and $\delta_{j}= 0$ whenever $j>n+s$.

Taking all this into account, we arrive at the following isomorphism.
Define mapping $\Phi\colon \mathbf{UC}(\infty) \longrightarrow F[X]$ by the following rule:
\begin{equation*}
\begin{gathered}
\mbox{if } \Gamma \in \mathbf{UC}(\infty),\\
\Gamma = E + \gamma_{1}\sum\limits_{k \in \mathbb N}E_{k+1,k} + \gamma_{2}\sum\limits_{k \in \mathbb N}E_{k+2,k} + \ldots + \gamma_{n}\sum\limits_{k \in \mathbb N}E_{k+n,k}, \\
\mbox{ then put } \Phi(\Gamma) = 1 + \gamma_{1}X + \gamma_{2}X^{2} + \ldots + \gamma_{n}X^{n}.
\end{gathered}
\end{equation*}
By above proved, $\Phi(\Gamma\Lambda) = \Phi(\Gamma) \Phi(\Lambda)$ for every matrices $\Gamma, \Lambda \in \mathbf{UC}(\infty)$. Clearly the mapping $\Phi$ is injective. Further, $\mathbf{Im}(\Phi)$ is a submonoid of $F[X]$, which consists of those polynomials whose free term is $1$.

Let $M = \lVert\alpha_{kt}\rVert_{k,t \in \mathbb N}$ where only finitely many coefficients $\alpha_{kt}, k \in \mathbb N$ are non-zero for every index ${t \in \mathbb N}$, and $D = \lVert\delta_{kt}\rVert_{k, t \in \mathbb N}$, where $\delta_{kt} = 0$ whenever $k \ne t$. Put
\begin{equation*}
\begin{gathered}
\Lambda = D^{-1}M = \lVert\lambda_{kt}\rVert_{k, t \in \mathbb N}, \\
P = MD = \lVert\rho_{kt}\rVert_{k, t \in \mathbb N}, \\
\Sigma = D^{-1}MD = \lVert\sigma_{kt}\rVert_{k, t \in \mathbb N}.
\end{gathered}
\end{equation*}
We have
\begin{equation*}
\begin{gathered}
\lambda_{kt} = \sum\limits_{n \in \mathbb N}\delta^{-1}_{kn}\alpha_{nt} = \delta^{-1}_{k k}\alpha_{kt} , \\
\rho_{kt} = \sum\limits_{n \in \mathbb N}\alpha_{kn}\delta_{nt} = \alpha_{kt}\delta_{tt},\\
\sigma_{kt} = \sum\limits_{n \in \mathbb N}\lambda_{kn}\delta_{nt} = \lambda_{kt}\delta_{tt} = \delta^{-1}_{kk}\alpha_{kt}\delta_{tt}.
\end{gathered}
\end{equation*}
In particular, if
\begin{equation*}
\begin{gathered}
\Gamma = E +\gamma_{1}\sum\limits_{k \in \mathbb N}E_{k+1,k} + \gamma_{2}\sum\limits_{k \in \mathbb N}E_{k+2,k} + \ldots + \gamma_{n}\sum\limits_{k \in \mathbb N}E_{k+n,k},\\
D = \sum\limits_{k \in \mathbb N}\mu^{k}E_{k,k}, 
\end{gathered}
\end{equation*}
then
\begin{equation*}
\sigma_{21} = \mu^{-1}\gamma_{1}, \sigma_{31} = \mu^{-2}\gamma_{2}, \ldots ,\sigma_{n + 1,1} = \mu^{-n}\gamma_{n}.
\end{equation*}
Therefore, if $\Theta = E +\theta_{1}\sum\limits_{k \in \mathbb N}E_{k+1,k} + \theta_{2}\sum\limits_{k \in \mathbb N}E_{k+2,k} + \ldots + \theta_{n}\sum\limits_{k \in \mathbb N}E_{k+n,k}$, where
\begin{equation*}
\theta_{1} = \mu\gamma_{1}, \theta_{2} = \mu^{2}\gamma_{2}, \ldots ,\theta_{n} = \mu^{n}\gamma_{n}.
\end{equation*}
Then by above proved $D^{-1}\Theta D = \Gamma$. It follows that $D^{-1}\mathbf{UC}(\infty)D = \mathbf{UC}(\infty)$.

\section{Derivations of a cyclic Leibniz algebra}
\begin{lemma} \label{Lemma 2.1}
	Let $L$ be a Leibniz algebra over a field $F$ and $f$ be a derivation of $L$. Then
	$f(\zeta^{left}(L)) \leqslant\zeta^{left}(L)$, $f(\zeta^{right}(L))\leqslant\zeta^{right}(L)$ and $f(\zeta(L))\leqslant\zeta (L)$.
\end{lemma}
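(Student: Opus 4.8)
The plan is to feed the defining derivation identity $f([a,b]) = [f(a),b] + [a,f(b)]$ a bracket that is already known to vanish, and then to observe that exactly one of the two resulting summands drops out because the element in question lies in the appropriate one-sided center. Since $f$ is not assumed surjective, I expect to obtain only the inclusions $f(\cdot)\leqslant(\cdot)$ rather than the equalities of \textbf{Lemma \ref{Lemma 1.1}}.

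First I would handle the left center. Let $z \in \zeta^{left}(L)$, so that $[z,w]=0$ for every $w \in L$. Fixing an arbitrary $y \in L$ and applying the derivation identity to the pair $(z,y)$ gives
\begin{equation*}
0 = f(0) = f([z,y]) = [f(z),y] + [z,f(y)].
\end{equation*}
Because $z$ annihilates every element of $L$ on the left, the term $[z,f(y)]$ vanishes, leaving $[f(z),y]=0$. As $y$ was arbitrary, this yields $f(z) \in \zeta^{left}(L)$, and hence $f(\zeta^{left}(L)) \leqslant \zeta^{left}(L)$.

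Next I would treat the right center by the mirror-image argument. For $z \in \zeta^{right}(L)$ one has $[w,z]=0$ for all $w \in L$; applying the identity now to the pair $(y,z)$ gives $0 = [f(y),z] + [y,f(z)]$, where this time it is the \emph{first} summand $[f(y),z]$ that vanishes, leaving $[y,f(z)]=0$ for every $y$, i.e. $f(z) \in \zeta^{right}(L)$. Finally, since $\zeta(L) = \zeta^{left}(L)\cap\zeta^{right}(L)$ directly from the definitions, any $z \in \zeta(L)$ lies in both one-sided centers, so by the two preceding inclusions $f(z)$ does as well, giving $f(\zeta(L)) \leqslant \zeta(L)$.

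The computation is entirely routine, so there is no genuine obstacle; the only subtlety worth flagging is bookkeeping in the non-commutative bracket. One must apply the identity to $[z,y]$ for the left center but to $[y,z]$ for the right center, so that the surviving term places $f(z)$ on the correct side, and one must use that the center element absorbs the \emph{image} $f(y)$ (not merely $y$) — which is exactly what membership in the one-sided center provides, with no surjectivity of $f$ required.
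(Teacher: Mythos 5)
Your proof is correct and follows essentially the same route as the paper: apply the derivation identity to a vanishing bracket $[z,y]$ (resp.\ $[y,z]$), kill the summand that vanishes by membership of $z$ in the one-sided center, and intersect the two inclusions for $\zeta(L)$. In fact your bookkeeping in the right-center case is the more careful one --- there the known vanishing term is $[f(y),z]$, leaving $[y,f(z)]=0$, exactly as you state (the paper's displayed chain retains the wrong summand at that step, an evident typo).
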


\begin{proof} 
	Let $x$ be an arbitrary element of $L$ and let $z \in \zeta^{left}(L)$. Then $[z, x] = 0$. Since a derivation is a linear mapping, $f([z, x]) = 0$. On the other hand,
	\begin{equation*}
	0 = f([z, x]) = [f(z), x] + [z, f(x)] = [f(z), x],
	\end{equation*}
	so that $f(z) \in \zeta^{left}(L)$.
	
	Let $z \in \zeta^{right}(L)$. Then $[x, z] = 0$. Now we have
	\begin{equation*}
	0 = f(0) = f([x, z]) = [f(x), z] + [x, f(z)] = [f(x), z],
	\end{equation*}
	so that $f(z) \in \zeta^{right}(L)$. Both above proved inclusions imply that
	\begin{equation*}
	f(\zeta(L))\leqslant\zeta(L).
	\end{equation*}
\end{proof}

\textbf{Corollary.} Let $L$ be a Leibniz algebra over a field $F$ and $f$ be a derivation of $L$. Then $f(\zeta_{\alpha}(L)) \leqslant\zeta_{\alpha}(L)$ for every ordinal $\alpha$.

\begin{lemma} \label{Lemma 2.3}
	Let $L$ be a Leibniz algebra over a field $F$ and $f$ be a derivation of $L$. Then $f(\gamma_{\alpha}(L)) \leqslant\gamma_{\alpha}(L)$ for all ordinals $\alpha$, in particular, $f(\gamma_{\infty}(L)) \leqslant\gamma_{\infty}(L)$.
\end{lemma}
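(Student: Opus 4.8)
The plan is to mirror the structure of the proof of \textbf{Lemma \ref{Lemma 1.3}}, replacing the endomorphism identity $f([x,y]) = [f(x),f(y)]$ by the derivation identity $f([x,y]) = [f(x),y] + [x,f(y)]$, and to argue by transfinite induction on $\alpha$. The base case $\alpha = 1$ is trivial since $\gamma_{1}(L) = L$ and $f$ maps $L$ into $L$. For $\alpha = 2$, if $x, y \in L$ then $f([x,y]) = [f(x),y] + [x,f(y)] \in [L,L] = \gamma_{2}(L)$, and since $\gamma_{2}(L)$ is spanned by such commutators we get $f(\gamma_{2}(L)) \leqslant \gamma_{2}(L)$.

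Assume now that $f(\gamma_{\beta}(L)) \leqslant \gamma_{\beta}(L)$ holds for all ordinals $\beta < \alpha$. If $\alpha$ is a limit ordinal, then $\gamma_{\alpha}(L) = \bigcap_{\beta < \alpha}\gamma_{\beta}(L)$, and exactly as in \textbf{Lemma \ref{Lemma 1.3}} we obtain
\begin{equation*}
f(\gamma_{\alpha}(L)) = f\left(\bigcap_{\beta < \alpha}\gamma_{\beta}(L)\right) \leqslant \bigcap_{\beta < \alpha} f(\gamma_{\beta}(L)) \leqslant \bigcap_{\beta < \alpha}\gamma_{\beta}(L) = \gamma_{\alpha}(L).
\end{equation*}

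If $\alpha$ is a successor, write $\alpha - 1 = \delta$, so that $\gamma_{\alpha}(L) = [L,\gamma_{\delta}(L)]$, and by the induction hypothesis $f(\gamma_{\delta}(L)) \leqslant \gamma_{\delta}(L)$. For $w \in L$ and $v \in \gamma_{\delta}(L)$ the derivation identity gives $f([w,v]) = [f(w),v] + [w,f(v)]$. Here the first summand $[f(w),v]$ lies in $[L,\gamma_{\delta}(L)] = \gamma_{\alpha}(L)$ because $v \in \gamma_{\delta}(L)$, and the second summand $[w,f(v)]$ also lies in $[L,\gamma_{\delta}(L)] = \gamma_{\alpha}(L)$ because $f(v) \in \gamma_{\delta}(L)$ by the induction hypothesis. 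Since $\gamma_{\alpha}(L)$ is an ideal, in particular a subspace, and is spanned by commutators $[w,v]$ of this shape, it follows that $f(\gamma_{\alpha}(L)) \leqslant \gamma_{\alpha}(L)$. Applying this at the stabilizing ordinal yields $f(\gamma_{\infty}(L)) \leqslant \gamma_{\infty}(L)$.

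The only point demanding any care, and hence the nominal ``main obstacle'', is the successor step: unlike the endomorphism case of \textbf{Lemma \ref{Lemma 1.3}}, the derivation identity produces \emph{two} terms, and one must verify that \emph{both} land in $\gamma_{\alpha}(L) = [L,\gamma_{\delta}(L)]$. The crossed term $[w,f(v)]$ is precisely where the induction hypothesis $f(\gamma_{\delta}(L)) \leqslant \gamma_{\delta}(L)$ is used; the other term is immediate. Everything else is a routine transfinite induction identical in form to the earlier lemmas.
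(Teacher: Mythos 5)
Your proof is correct and follows essentially the same route as the paper's: transfinite induction with the same limit-ordinal computation, and the same successor step in which the derivation identity $f([w,v]) = [f(w),v] + [w,f(v)]$ places both summands in $[L,\gamma_{\delta}(L)] = \gamma_{\alpha}(L)$, the second via the induction hypothesis. The extra care you note about the two terms is exactly what the paper's (terser) proof does implicitly.
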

\begin{proof}
	If $x, y$ are elements of $L$, then 
	\begin{equation*}
	f([x, y]) = [f(x), y] + [x, f(y)] \in [L, L]. 
	\end{equation*}
	It follows that $f([L, L] \leqslant [L, L]$. Suppose that we already proved that \linebreak[5] $f(\gamma_{\beta}(L)) \leqslant\gamma_{\beta}(L)$ for all ordinals $\beta<\alpha$. If $\alpha$ is a limit ordinal, then \linebreak[5] $\gamma_{\alpha}(L) = \bigcap\limits_{\beta<\alpha}\gamma_{\beta}(L)$. In this case,
	\begin{equation*}
	f(\gamma_{\alpha}(L))= f\left(\bigcap\limits_{\beta<\alpha}\gamma_{\beta}(L)\right) \leqslant\bigcap\limits_{\beta<\alpha}f(\gamma_{\beta}(L)) \leqslant\bigcap\limits_{\beta<\alpha}\gamma_{\beta}(L) = \gamma_{\alpha}(L).
	\end{equation*}
	Suppose now that $\alpha$ is a not limit ordinal, then $\alpha - 1 = \delta$ does exist. We have $\gamma_{\alpha}(L) = [L, \gamma_{\delta}(L)]$. By induction hypothesis, $f(\gamma_{\delta}(L)) \leqslant\gamma_{\delta}(L)$. Let $w \in L$, \linebreak[5] $v \in \gamma_{\delta}(L)$, then $f([w, v]) = [f(w), v] + [w, f(v)] \in [L, \gamma_{\delta}(L)] = \gamma_{\alpha}(L)$. It follows that $f([L, \gamma_{\delta}(L)]) \leqslant\gamma_{\alpha}(L)$.
\end{proof}
\begin{lemma} \label{Lemma 2.4} 
	Let $L$ be a cyclic infinite dimensional Leibniz algebra over a field $F$, $L =\bigoplus\limits_{n \in \mathbb N}Fa_{n}$, where $[a_{1}, a_{n}] = a_{n + 1}$ for all positive integer $n$, $[a_{m}, a_{k}] = 0$ for all $m > 1$, $k \in \mathbb N$.
	Then a linear mapping $f$ is a derivation of $L$ if and only if
	\begin{equation*}
	\begin{gathered}
	f(a_{1}) = \gamma_{1}a_{1} + \gamma_{2}a_{2} + \gamma_{3} a_{3} + \ldots + \gamma_{n-1} a_{n - 1}+ \gamma_{n}a_{n}, \\
	f(a_{2}) = 2\gamma_{1}a_{2} + \gamma_{2}a_{3} + \ldots + \gamma_{n-1} a_{n} + \gamma_{n}a_{n+1}, \\
	f(a_{3}) = 3\gamma_{1}a_{3} + \gamma_{2} a_{4} + \ldots + \gamma_{n-2}a_{n}+ \gamma_{n-1}a_{n + 1} + \gamma_{n}a_{n+2} , \\
	\mbox{ and } \\
	f(a_{s}) = s\gamma_{1}a_{s} + \gamma_{2}a_{s+1} + \gamma_{3}a_{s+2} + \ldots + \gamma_{n}a_{n+s-1} \\
	\mbox{ for all positive integers } s.
	\end{gathered}
	\end{equation*}
\end{lemma}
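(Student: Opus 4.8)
The plan is to exploit the fact that $L$ is cyclic, generated by $a_{1}$ through the relation $a_{s+1} = [a_{1}, a_{s}]$, so that a derivation is completely determined by its value on $a_{1}$. I would write $f(a_{1}) = \gamma_{1}a_{1} + \gamma_{2}a_{2} + \ldots + \gamma_{n}a_{n}$ (automatically a finite sum, since $f$ is a linear transformation of the space with basis $\{a_{k}\}$, and consistent with $f(\gamma_{s}(L)) \leqslant \gamma_{s}(L)$ from \textbf{Lemma \ref{Lemma 2.3}}), then establish the displayed formulas for the necessity direction by induction on $s$, and finally verify sufficiency by checking the derivation identity on all pairs of basis vectors.

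For necessity, the two elementary facts driving everything are that the only nonzero products among basis vectors are $[a_{1}, a_{k}] = a_{k+1}$, and consequently $[a_{m}, a_{k}] = 0$ whenever $m > 1$. The base step computes
\begin{equation*}
f(a_{2}) = f([a_{1}, a_{1}]) = [f(a_{1}), a_{1}] + [a_{1}, f(a_{1})].
\end{equation*}
Here $[f(a_{1}), a_{1}]$ retains only the contribution of $\gamma_{1}a_{1}$, giving $\gamma_{1}a_{2}$, while $[a_{1}, f(a_{1})] = \gamma_{1}a_{2} + \gamma_{2}a_{3} + \ldots + \gamma_{n}a_{n+1}$; adding the two yields $f(a_{2}) = 2\gamma_{1}a_{2} + \gamma_{2}a_{3} + \ldots + \gamma_{n}a_{n+1}$, as asserted. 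For the inductive step I would assume the formula for $f(a_{s})$ and apply $f$ to $a_{s+1} = [a_{1}, a_{s}]$:
\begin{equation*}
f(a_{s+1}) = [f(a_{1}), a_{s}] + [a_{1}, f(a_{s})].
\end{equation*}
Again $[f(a_{1}), a_{s}] = \gamma_{1}a_{s+1}$, since every term $a_{k}$ with $k > 1$ in $f(a_{1})$ annihilates $a_{s}$ on the left; and $[a_{1}, f(a_{s})]$ shifts each index up by one, sending $s\gamma_{1}a_{s} + \gamma_{2}a_{s+1} + \ldots + \gamma_{n}a_{n+s-1}$ to $s\gamma_{1}a_{s+1} + \gamma_{2}a_{s+2} + \ldots + \gamma_{n}a_{n+s}$. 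Their sum has diagonal coefficient $(s+1)\gamma_{1}$ with the tail unchanged, which is exactly the claimed formula for $s+1$.

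For sufficiency, I would suppose that $f$ is the linear map defined by these formulas and verify $f([a_{i}, a_{j}]) = [f(a_{i}), a_{j}] + [a_{i}, f(a_{j})]$ for all $i, j$, which extends to arbitrary $x, y$ by bilinearity. When $i = 1$ we have $[a_{1}, a_{j}] = a_{j+1}$, and the arithmetic of the necessity computation shows that both sides equal $(j+1)\gamma_{1}a_{j+1} + \gamma_{2}a_{j+2} + \ldots + \gamma_{n}a_{n+j}$. When $i = m > 1$, the left-hand side is $f(0) = 0$; on the right, $[f(a_{m}), a_{j}] = 0$ because every basis vector occurring in $f(a_{m})$ has index at least $m > 1$ and hence annihilates $a_{j}$ on the left, while $[a_{m}, f(a_{j})] = 0$ because $a_{m}$ with $m > 1$ annihilates everything on the left. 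Thus both sides vanish and the identity holds in every case.

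The only delicate point is the coefficient bookkeeping in the inductive step: one must see that the diagonal coefficient of $f(a_{s})$ grows linearly as $s\gamma_{1}$, arising from the single copy $\gamma_{1}a_{s+1}$ produced by $[f(a_{1}), a_{s}]$ added to the $s\gamma_{1}a_{s+1}$ carried up by the leading term of $[a_{1}, f(a_{s})]$, whereas the off-diagonal coefficients $\gamma_{2}, \ldots, \gamma_{n}$ merely translate without being rescaled. Everything else follows directly from the degenerate multiplication table of $L$.
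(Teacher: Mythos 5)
Your proof is correct and takes essentially the same approach as the paper: the necessity direction is the same induction on $s$ via $a_{s+1} = [a_{1}, a_{s}]$ (using \textbf{Lemma \ref{Lemma 2.3}} to control the supports), and the sufficiency direction is the same direct verification of the derivation identity. The only difference is organizational: you check the identity on pairs of basis vectors and extend by bilinearity of both sides, whereas the paper expands arbitrary elements $x$, $y$ as linear combinations and computes everything in full --- your version is tidier, but it is the same computation.
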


\begin{proof}
	Put $L_{j}= \bigoplus\limits_{n\geqslant j}Fa_{n}$. We have $\gamma_{1}(L) = L = L_{1}$, $\gamma_{2}(L) = L_{2}$, and, recursively, $\gamma_{n}(L) = L_{n}$ for all positive integer $n$. \textbf{Lemma \ref{Lemma 2.3}} shows that $f(L_{n}) \leqslant L_{n}$ for all positive integer $n$.
	We have
	\begin{equation*}
	\begin{gathered}
	f(a_{1}) = \sum\limits_{n \in \mathbb N}\gamma_{n}a_{n} \\
	\mbox{ (here only finitely many coefficients $\gamma_{n}$ are non-zero) }.
	\end{gathered}
	\end{equation*}
	Then
	\begin{equation*}
	\begin{gathered}
	f(a_{2}) = f([a_{1}, a_{1}]) = [f(a_{1}), a_{1}] + [a_{1}, f(a_{1})] = 
	\\ = \Bigg[ \bigg[\sum\limits_{k \in \mathbb N}\gamma_{k}a_{k}, a_{1}\bigg] + \bigg[a_{1}, \sum\limits_{k \in \mathbb N} \gamma_{k}a_{k}\bigg] \Bigg] = 
	\\ = [\gamma_{1}a_{1}, a_{1}] + \sum\limits_{k \in \mathbb N} \gamma_{k}[a_{1}, a_{k}] =\gamma_{1}a_{2} + \sum\limits_{k \in \mathbb N} \gamma_{k}a_{k+1} = \\
	= 2\gamma_{1}a_{2} + \sum\limits_{k \in \mathbb N} \gamma_{k+1}a_{k+2}, \\
	f(a_{3}) = f([a_{1}, a_{2}] = [f(a_{1}), a_{2}] + [a_{1}, f(a_{2})] = \\ = \left[\sum\limits_{k \in \mathbb N}\gamma_{k}a_{k}, a_{2}\right] + \left[a_{1}, 2\gamma_{1}a_{2} + \sum\limits_{k \in \mathbb N} \gamma_{k+1}a_{k+2}\right] = \\
	= [\gamma_{1}a_{1}, a_{2}] + 2\gamma_{1}[a_{1}, a_{2}] + \sum\limits_{k \in \mathbb N} \gamma_{k+1}[a_{1}, a_{k+2}] = \\ = 3\gamma_{1}a_{3} + \sum\limits_{k \in \mathbb N} \gamma_{k+1}a_{k+3}.
	\end{gathered}
	\end{equation*}
	Using the similar arguments and ordinary induction, we obtain that
	\begin{equation*}
	f(a_{s}) = s\gamma_{1}a_{s} + \sum\limits_{k \in \mathbb N} \gamma_{k + 1}a_{k+s}.
	\end{equation*}
	
	Conversely, let $x = \lambda_{1}a_{1} + \lambda_{2}a_{2} + \ldots + \lambda_{t}a_{t}$ and $y = \mu_{1}a_{1} + \mu_{2}a_{2} + \ldots + \mu_{m}a_{m}$ be arbitrary elements of $L$. Without loss of generality we may assume that $t = m$. Suppose that linear mapping $f$ satisfies the above conditions. We have 
	
	\begin{equation*}
	\begin{gathered}
	[x,y] = [\lambda_{1}a_{1} + \lambda_{2}a_{2} + \ldots + \lambda_{t}a_{t}, \mu_{1}a_{1} + \mu_{2}a_{2} + \ldots + \mu_{t}a_{t}] = \\
	= [\lambda_{1}a_{1}, \mu_{1}a_{1} + \mu_{2}a_{2} + \ldots + \mu_{t}a_{t}] = \lambda_{1}\mu_{1}a_{2} + \lambda_{1}\mu_{2}a_{3} + \ldots + \lambda_{1}\mu_{t}a_{t+1}; 
	\end{gathered}
	\end{equation*}
	
	\begin{equation*}
	\begin{gathered}
	f([x, y]) = f(\lambda_{1}\mu_{1}a_{2} + \lambda_{1}\mu_{2}a_{3} + \ldots + \lambda_{1}\mu_{t}a_{t+1}) = \\
	= \lambda_{1}\mu_{1}f(a_{2}) + \lambda_{1}\mu_{2}f(a_{3}) + \ldots + \lambda_{1}\mu_{t}f(a_{t+1}) = \\
	=\lambda_{1}\mu_{1}\left(2\gamma_{1}a_{2} + \sum\limits_{k \in \mathbb N} \gamma_{k+1}a_{k+2}\right) +\\
	+ \lambda_{1}\mu_{2}\left(3\gamma_{1}a_{3} + \sum\limits_{k \in \mathbb N} \gamma_{k+1}a_{k+3}\right) + \\
	+ \lambda_{1}\mu_{3}\left(4\gamma_{1}a_{4} + \sum\limits_{k \in \mathbb N} \gamma_{k+1}a_{k+4}\right) + \ldots + \\
	+\lambda_{1}\mu_{t}\left((t+1)\gamma_{1}a_{t+1} + \sum\limits_{k \in \mathbb N} \gamma_{k+1}a_{k+t+1}\right) = \\
	= 2\lambda_{1}\mu_{1}\gamma_{1}a_{2} + (\lambda_{1}\mu_{1}\gamma_{2} + 3\lambda_{1}\mu_{2}\gamma_{1})a_{3} + \\
	+(\lambda_{1}\mu_{1}\gamma_{3} + \lambda_{1}\mu_{2}\gamma_{2} + 4\lambda_{1}\mu_{3}\gamma_{1})a_{4} + \ldots + \\
	+(\lambda_{1}\mu_{1}\gamma_{t} + \lambda_{1}\mu_{2}\gamma_{t-1} + \ldots + (t+1)\lambda_{1}\mu_{t}\gamma_{1})a_{t+1} + \\
	+(\lambda_{1}\mu_{1}\gamma_{t+1} + \lambda_{1}\mu_{2}\gamma_{t} + \lambda_{1}\mu_{3}\gamma_{t-1} + \ldots + \lambda_{1}\mu_{t}\gamma_{2} )a_{t+2};
	\end{gathered}
	\end{equation*}	
	
	\begin{equation*}
	\begin{gathered}
	f(x) = f(\lambda_{1}a_{1} + \lambda_{2}a_{2} + \ldots + \lambda_{t}a_{t}) = \\ =\lambda_{1}f(a_{1}) + \lambda_{2}f(a_{2}) + \ldots + \lambda_{t}f(a_{t}) = \\
	=\lambda_{1}\left(\sum\limits_{k \in \mathbb N} \gamma_{k}a_{k}\right) + \\
	+ \lambda_{2}\left(2\gamma_{1}a_{2} + \sum\limits_{k \in \mathbb N} \gamma_{k+1}a_{k+2}\right) + \ldots + \\
	+ \lambda_{t}\left(t\gamma_{1}a_{t} + \sum\limits_{k \in \mathbb N} \gamma_{k+1}a_{k+t}\right);
	\end{gathered}
	\end{equation*}
	
	\begin{equation*}
	\begin{gathered}
	f(y) = f(\mu_{1}a_{1} + \mu_{2}a_{2} + \ldots + \mu_{t}a_{t}) = \\
	=\mu_{1}f(a_{1}) + \mu_{2}f(a_{2}) + \ldots + \mu_{t}f(a_{t}) = \\
	=\mu_{1}\left(\sum\limits_{k \in \mathbb N} \gamma_{k}a_{k}\right) + \\
	+\mu_{2}\left(2\gamma_{1}a_{2} + \sum\limits_{k \in \mathbb N} \gamma_{k+1}a_{k+2}\right) + \ldots + \\
	+\mu_{t}\left(t\gamma_{1}a_{t} + \sum\limits_{k \in \mathbb N} \gamma_{k+1}a_{k+t}\right);
	\end{gathered}
	\end{equation*}
	\begin{equation*}
	\begin{gathered}
	[f(x), y] = \Bigg[\lambda_{1}\left(\sum\limits_{k \in \mathbb N} \gamma_{k}a_{k}\right) + \lambda_{2}\left(2\gamma_{1}a_{2} + \sum\limits_{k \in \mathbb N} \gamma_{k+1}a_{k+2}\right) + \ldots + \\
	+\lambda_{t}\left(t\gamma_{1}a_{t} + \sum\limits_{k \in \mathbb N} \gamma_{k+1}a_{k+t}\right),\mu_{1}a_{1} + \mu_{2}a_{2} + \ldots + \mu_{t}a_{t}\Bigg] = \\
	= [\lambda_{1}\gamma_{1}a_{1},\mu_{1}a_{1} + \mu_{2}a_{2} + \mu_{3}a_{3} +\ldots + \mu_{t}a_{t}] = \\
	=\lambda_{1}\gamma_{1}\mu_{1}a_{2} + \lambda_{1}\gamma_{1}\mu_{2}a_{3} + \lambda_{1}\gamma_{1}\mu_{3}a_{4} +\ldots + \lambda_{1}\gamma_{1}\mu_{t}a_{t+1};
	\end{gathered}
	\end{equation*}
	
	\begin{equation*}
	\begin{gathered}[]
	[x, f(y)] = \Bigg[\lambda_{1}a_{1} + \lambda_{2}a_{2} + \ldots + \lambda_{t}a_{t}, \mu_{1}\left(\sum\limits_{k \in \mathbb N} \gamma_{k}a_{k}\right) + \\
	+ \mu_{2}\left(2\gamma_{1}a_{2} + \sum\limits_{k \in \mathbb N} \gamma_{k+1}a_{k+2}\right) + \ldots + \mu_{t}\left(t\gamma_{1}a_{t} + \sum\limits_{k \in \mathbb N} \gamma_{k+1}a_{k+t}\right)\Bigg] = \\
	= \Bigg[\lambda_{1}a_{1}, \mu_{1}\left(\sum\limits_{k \in \mathbb N} \gamma_{k}a_{k}\right) + \mu_{2}\left(2\gamma_{1}a_{2} + \sum\limits_{k \in \mathbb N} \gamma_{k+1}a_{k+2}\right) + \ldots + \\
	+ \mu_{t}\left(t\gamma_{1}a_{t} + \sum\limits_{k \in \mathbb N} \gamma_{k+1}a_{k+t}\right)\Bigg] = \\
	= \lambda_{1}\mu_{1}\gamma_{1}a_{2} + (\lambda_{1}\mu_{1}\gamma_{2} + 2\lambda_{1}\mu_{2}\gamma_{1})a_{3} + \\ 
	+(\lambda_{1}\mu_{1}\gamma_{3} + \lambda_{1}\mu_{2}\gamma_{2} + 3\lambda_{1}\mu_{3}\gamma_{1})a_{4} + \ldots + \\
	+	(\lambda_{1}\mu_{1}\gamma_{t} + \lambda_{1}\mu_{2}\gamma_{t-1} + \ldots + t\lambda_{1}\mu_{t}\gamma_{1})a_{t+1} + \\
	+ (\lambda_{1}\mu_{1}\gamma_{t+1} + \lambda_{1}\mu_{2}\gamma_{t} + \lambda_{1}\mu_{3}\gamma_{t-1} + \ldots + \lambda_{1}\mu_{t}\gamma_{2})a_{t+2}.
	\end{gathered}
	\end{equation*}
	Thus we can see that $f([x, y]) = [f(x), y] + [x, f(y)]$.
\end{proof}

\section{Proof of Theorem B}

We have $L = \bigoplus\limits_{n \in \mathbb N}Fa_{n}$, where $[a_{1}, a_{n}] = a_{n + 1}$ for all positive integer $n$, $[a_{m}, a_{k}]=$ $ = 0$ for all $m > 1$, $k \in \mathbb N$ \cite[ Corollary 2.2 and 2.1]{CKSu2017}.

If $f$ is an arbitrary derivation of $L$ then \textbf{Lemma \ref{Lemma 2.4}} shows that $f$ in basis $\{a_{n} \mid n \in \mathbb N\}$ has the following matrix
\begin{equation*}
\begingroup
\setlength\arraycolsep{6pt}
\begin{pmatrix}
\gamma_{1} & 0 & 0 & 0 &\ldots & 0 & 0 & 0 & \ldots \\
\gamma_{2} & 2\gamma_{1} & 0 & 0 &\ldots & 0 & 0 & 0 & \ldots \\
\gamma_{3} & \gamma_{2} & 3\gamma_{1} & 0 &\ldots & 0 & 0 & 0 & \ldots \\
\gamma_{4} & \gamma_{3} & \gamma_{2} & 4\gamma_{1} &\ldots & 0 & 0 & 0 & \ldots \\
\gamma_{5} & \gamma_{4} & \gamma_{3} & \gamma_{2} &\ldots & 0 & 0 & 0 & \ldots \\
\gamma_{6} & \gamma_{5} & \gamma_{4} & \gamma_{3} &\ldots & 0 & 0 & 0 & \ldots \\
\gamma_{7} & \gamma_{6} & \gamma_{5} & \gamma_{4} &\ldots & 0 & 0 & 0 & \ldots \\
\vdots& \vdots& \vdots& \vdots& \ddots& \vdots& \vdots& \vdots& \vdots \\
\gamma_{n-2} & \gamma_{n-3} & \gamma_{n-4} & \gamma_{n-5} &\ldots & (n-2)\gamma_{1} & 0 & 0 & \ldots \\
\gamma_{n-1} & \gamma_{n-2} & \gamma_{n-3} & \gamma_{n-4} &\ldots & \gamma_{2} & (n-1)\gamma_{1} & 0 & \ldots \\
\gamma_{n} & \gamma_{n-1} & \gamma_{n-2} & \gamma_{n-3} &\ldots & \gamma_{3} & \gamma_{2} & n\gamma_{1} & \ldots \\
0 & \gamma_{n} & \gamma_{n-1} & \gamma_{n-2} &\ldots & \gamma_{4} & \gamma_{3} & \gamma_{2} & \ldots \\
0 & 0 & \gamma_{n} & \gamma_{n-1} &\ldots & \gamma_{5} & \gamma_{4} & \gamma_{3} & \ldots \\
0 & 0 & 0 & \gamma_{n} &\ldots & \gamma_{6} & \gamma_{5} & \gamma_{4} & \ldots \\
0 & 0 & 0 & 0 &\ldots & \gamma_{7} & \gamma_{6} & \gamma_{5} & \ldots \\
\vdots& \vdots& \vdots& \vdots& \vdots& \vdots& \vdots& \vdots& \vdots \\
\end{pmatrix}
\endgroup
\end{equation*}

Such matrices are finitary, so we can say about the product of these matrices and hence about their commutator. Since the sum and product of two derivations of $L$ is itself a derivation, we obtain that the matrices of this kind form a subalgebra of the Lie algebra of finitary matrices. Denote this subalgebra by $\mathbf{LC}(\infty)$. Thus we obtain isomorphism $\mathbf{Der}(L)\cong \mathbf{LC}(\infty)$.

Consider the set of matrices, having the following form
\begin{equation*}
\begingroup
\setlength\arraycolsep{10pt}
\begin{pmatrix}
0 & 0 & 0 & 0 &\ldots & 0 & 0 & 0 & \ldots \\
\gamma_{2} & 0 & 0 & 0 &\ldots & 0 & 0 & 0 & \ldots \\
\gamma_{3} & \gamma_{2} & 0 & 0 &\ldots & 0 & 0 & 0 & \ldots \\
\gamma_{4} & \gamma_{3} & \gamma_{2} & 0 &\ldots & 0 & 0 & 0 & \ldots \\
\gamma_{5} & \gamma_{4} & \gamma_{3} & \gamma_{2} &\ldots & 0 & 0 & 0 & \ldots \\
\gamma_{6} & \gamma_{5} & \gamma_{4} & \gamma_{3} &\ldots & 0 & 0 & 0 & \ldots \\
\gamma_{7} & \gamma_{6} & \gamma_{5} & \gamma_{4} &\ldots & 0 & 0 & 0 & \ldots \\
\vdots& \vdots& \vdots& \vdots& \ddots& \vdots& \vdots& \vdots& \vdots \\
\gamma_{n-2} & \gamma_{n-3} & \gamma_{n-4} & \gamma_{n-5} &\ldots & 0 & 0 & 0 & \ldots \\
\gamma_{n-1} & \gamma_{n-2} & \gamma_{n-3} & \gamma_{n-4} &\ldots & \gamma_{2} & 0 & 0 & \ldots \\
\gamma_{n} & \gamma_{n-1} & \gamma_{n-2} & \gamma_{n-3} &\ldots & \gamma_{3} & \gamma_{2} & 0 & \ldots \\
0 & \gamma_{n} & \gamma_{n-1} & \gamma_{n-2} &\ldots & \gamma_{4} & \gamma_{3} & \gamma_{2} & \ldots \\
0 & 0 & \gamma_{n} & \gamma_{n-1} &\ldots & \gamma_{5} & \gamma_{4} & \gamma_{3} & \ldots \\
0 & 0 & 0 & \gamma_{n} &\ldots & \gamma_{6} & \gamma_{5} & \gamma_{4} & \ldots \\
0 & 0 & 0 & 0 &\ldots & \gamma_{7} & \gamma_{6} & \gamma_{5} & \ldots \\
\vdots& \vdots& \vdots& \vdots& \vdots& \vdots& \vdots& \vdots& \vdots \\
\end{pmatrix}
\endgroup
\end{equation*}
Denote the set of all matrices of this form by $\mathbf{NC}(\infty)$.

Further, denoteby $A$ the subset of $\mathbf{Der}(L)$, consisting of all derivations $f$ such that $f(x) \in [L, L]$ for each element $x \in L$. If $h$ is another endomorphism of $L$ such that $h(x) \in [L, L]$ for each element $x \in L$, then
\begin{equation*}
\begin{gathered}
(f-h)(x) = f(x) - h(x) \in [L, L] \\
and \\
[f,h](x) = (f \circ h - h \circ f)(x) = f(h(x)) - h(f(x)) \in [L, L]
\end{gathered}
\end{equation*}
By \textbf{Lemma \ref{Lemma 2.3}}, it follows that $A$ is a subalgebra of $\mathbf{Der}(L)$. Moreover, $A$ is an ideal of $\mathbf{Der}(L)$. Indeed, if $f \in A$ and $h$ is an arbitrary derivation of $L$, then $f(h(x)) \in [L, L]$ by the definition of $f$, and by \textbf{Lemma \ref{Lemma 2.3}}, $h(f(x)) \in [L, L]$. It follows that $\mathbf{NC}(\infty)$ is an ideal of $\mathbf{LC}(\infty)$. It is not hard to see that we can write every matrix from $\mathbf{LC}(\infty)$, in the form
\begin{equation*}
\gamma_{2}\sum\limits_{k \in \mathbb N}E_{k+1,k} + \gamma_{3}\sum\limits_{k \in \mathbb N}E_{k+2,k} + \ldots + \gamma_{n}\sum\limits_{k \in \mathbb N}E_{k+n-1,k}.
\end{equation*}
Denote by $\mathbf{DaC}(\infty)$ the subset of $\mathbf{LC}(\infty)$, having the form $\sum\limits_{k \in \mathbb N}k\gamma E_{k,k}$. It is not hard to see that $\mathbf{DaC}(\infty)$ is closed by addition and multiplication, and, moreover, the multiplication is commutative. Thus, we can consider $\mathbf{DsC}(\infty)$ as an abelian Lie
subalgebra of $\mathbf{LC}(\infty)$. The preimage of $\mathbf{DaC}(\infty)$ in $\mathbf{Der}(L)$ is subset $D$, consisting of all derivations $f$ such that $f(a_{1}) = \gamma a_{1}$. Thus we obtain that $D$ is an abelian subalgebra of $\mathbf{Der}(L)$. Clearly the mapping $\theta\colon \mathbf{DaC}(\infty) \longrightarrow F$ defined by the rule $\theta\left(\sum\limits_{k \in \mathbb N}k\gamma E_{k,k}\right) = \gamma$, is an isomorphism. It shows that $\mathbf{DaC}(\infty)$ is an abelian subalgebra of a Lie algebra
$\mathbf{LC}(\infty)$ and $\mathbf{DaC}(\infty)$ is isomorphic to a field $F$.

It is clear that every matrix from $\mathbf{LC}(\infty)$ is a sum of a matrix from $\mathbf{NC}(\infty)$ and a matrix from $\mathbf{DaC}(\infty)$. This means that a Lie algebra $\mathbf{LC}(\infty)$ is a sum of ideal $\mathbf{NC}(\infty)$ and abelian subalgebra $\mathbf{DaC}(\infty)$, and, moreover, their intersection is zero.

Let $\Gamma, \Lambda \in \mathbf{NC}(\infty)$, where 
\begin{equation*}
\begin{gathered}
\Gamma = \sum\limits_{t \in \mathbb N} \left(\gamma_{t}\sum\limits_{k \in \mathbb N}E_{k+t,k}\right), \\
\Lambda = E +\sum\limits_{t \in \mathbb N} \left(\lambda_{t}\sum\limits_{k \in \mathbb N}E_{k+t,k}\right)
\end{gathered}
\end{equation*}
(here only finitely many coefficients $\gamma_{t}, \lambda_{t}$ are non-zero). As in \textbf{Theorem \ref{Theorem A}}, it is possible to prove that $\Gamma\Lambda \in \mathbf{NC}(\infty)$ and $\Gamma\Lambda = \Lambda\Gamma$. Hence, ideal $\mathbf{NC}(\infty)$ is abelian. Isomorphism $\mathbf{NC}(\infty)\cong A$ show that $A$ is also abelian.

Finally, let $M = \lVert\alpha_{kt}\rVert_{k,t \in \mathbb N}$ where only finitely many coefficients $\alpha_{kt}$, $k\in~\mathbb N$ are non-zero for every index $t \in \mathbb N$, and $D = \lVert\delta_{kt}\rVert_{k,t \in \mathbb N}$, where $\delta_{kt} = 0$ \linebreak[5] whenever $k \ne t$. Put
\begin{equation*}
\begin{gathered}
\Lambda = DM = \lVert\lambda_{kt}\rVert_{k, t \in \mathbb N},\\ 
P = MD = \lVert\rho_{kt}\rVert_{k, t \in \mathbb N}, \\ 
\Sigma = [D, M] = DM - MD= \lVert\sigma_{kt}\rVert_{k,t \in \mathbb N}.
\end{gathered}
\end{equation*}
We have
\begin{equation*}
\begin{gathered}
\lambda_{kt} = \sum\limits_{n \in \mathbb N}\delta_{kn}\alpha_{nt} = \delta_{kk}\alpha_{kt}, \\
\rho_{kt} = \sum\limits_{n \in \mathbb N}\alpha_{kn}\delta_{nt} = \alpha_{kt}\delta_{tt}, \\
\sigma_{kt} = \lambda_{kt} - \rho_{kt} = \delta_{kk}\alpha_{kt} - \alpha_{kt}\delta_{tt}= \alpha_{kt}(\delta_{kk} - \delta_{tt}).
\end{gathered}
\end{equation*}
In particular, if
\begin{equation*}
\begin{gathered}
\Gamma = \gamma_{1}\sum\limits_{k \in \mathbb N}E_{k+1,k} + \gamma_{2}\sum\limits_{k \in \mathbb N}E_{k+2,k} + \ldots + \gamma_{n}\sum\limits_{k \in \mathbb N}E_{k+n,k},\\
D = \sum\limits_{k \in \mathbb N}k\mu E_{k,k},
\end{gathered}
\end{equation*}
then
\begin{equation*}
\begin{gathered}
\sigma_{kt} = \alpha_{kt}(\delta_{kk} - \delta_{tt}) = \alpha_{kt}(k\mu - t\mu) =  \mu\alpha_{kt}(k - t).
\end{gathered}
\end{equation*}
Find the first column of the matrix $[D,\Gamma]$. As we saw earlier it defines this matrix. We have
\begin{equation*}
\begin{gathered}
\sigma_{21} = \mu\gamma_{1}, \sigma_{31} = 2\mu\gamma_{2}, \sigma_{41} = 3\mu\gamma_{2},\ldots , \sigma_{n + 1,1} = n\mu\gamma_{n}.
\end{gathered}
\end{equation*}

Suppose that $\mu\ne 0$ (if $\mu = 0$, then $D = 0$). If $\mathbf{char}(F) = 0$, then put
\begin{equation*}
\begin{gathered}
\Theta = \theta_{1}\sum\limits_{k \in \mathbb N}E_{k+1,k} + \theta_{2}\sum\limits_{k \in \mathbb N}E_{k+2,k} + \ldots + \theta_{n}\sum\limits_{k \in \mathbb N}E_{k+n,k}, \\ where \\
\theta_{1} = \mu^{-1}\gamma_{1}, \theta_{2} = \frac{1}{2} \mu^{-1}\gamma_{2}, \ldots ,\theta_{n} = \frac{1}{n}\mu^{-1}\gamma_{n}.
\end{gathered}
\end{equation*}
Then by above proved $[D, \Theta] = \Gamma$. It follows that $[D, \mathbf{NC}(\infty)] = \mathbf{NC}(\infty)$.

\newpage

\end{document}